\begin{document}

\title[Neumann problem for fractional Schr\"odinger equations]
{Singularly perturbed Neumann problem for fractional Schr\"odinger equations}
\author{Guoyuan Chen}
\address{School of Data Sciences, Zhejiang University of Finance \& Economics, Hangzhou 310018, Zhejiang, P. R. China}
\email{gychen@zufe.edu.cn}
\newcommand{\optional}[1]{\relax}
\setcounter{secnumdepth}{3}
\setcounter{section}{0} \setcounter{equation}{0}
\numberwithin{equation}{section}

\subjclass[2010]{35J50; 35J67; 45G05}
\keywords{Neumann problem; nonlinear fractional Schr\"odinger equations; singular perturbation; fractional Laplacian}
\date{\today}
\begin{abstract}
This paper is concerned with a Neumann type problem for singularly perturbed fractional nonlinear Schr\"odinger equations with subcritical exponent. For some smooth bounded domain $\Omega\subset \mathbf R^n$, our boundary condition is given by
\begin{equation*}
\int_{\Omega}\frac{u(x)-u(y)}{|x-y|^{n+2s}}dy=0\quad\mbox{for }x\in \mathbf R^n\setminus\bar\Omega.
\end{equation*}
We establish existence of nonnegative small energy solutions, and also investigate the integrability of the solutions on $\mathbf R^n$.
\end{abstract}
\maketitle

\section{Introduction}

The main purpose of this paper is to investigate a singularly perturbed Neumann type problem for fractional Schr\"odinger equations. Precisely, given a smooth bounded domain $\Omega\subset \mathbf R^n$, we consider the following problem
\begin{equation}\label{e:np}
\left\{\begin{array}{ll}
  \varepsilon^{2s}(-\Delta)^su+u=|u|^{p-1}u & \mbox{ in }\Omega, \\
  \mathcal N_su=0 & \mbox{ on }\mathbf R^n\setminus\bar\Omega.
\end{array}\right.
\end{equation}
Here $\varepsilon>0$, $s\in (0,1)$, $n\ge 2$, $p\in (1,\frac{n+2s}{n-2s})$, and
\begin{equation}\label{e:ns}
\mathcal N_su(x)=C_{n,s}\int_{\Omega}\frac{u(x)-u(y)}{|x-y|^{n+2s}}dy,\quad x\in \mathbf R^n\setminus\bar\Omega,
\end{equation}
where $C_{n,s}$ is the normalization constant in the definition of fractional Laplacian
\begin{equation*}
(-\Delta)^su(x)=C_{n,s}{\rm P.V.}\int_{\mathbf R^n}\frac{u(x)-u(y)}{|x-y|^{n+2s}}dy.
\end{equation*}

 This type of boundary problem for fractional Laplacian was introduced by Dipierro, Ros-Oton and Valdinoci in \cite{DRV:NPNBC}. It corresponds to a jump process as follows: If a particle has gone to $x\in \mathbf R^n\setminus \bar\Omega$, then it may come back to any point $y\in \Omega$, the probability of jumping from $x$ to $y$ being proportional to $|x-y|^{-n-2s}$. From mathematical point of view, such kind of boundary conditions generalize the classical Neumann conditions for elliptic (or parabolic) differential equations. That is, if $s\to 1$, then $\mathcal N_s u=0$ becomes the classical Neumann condition. For more details, see \cite{DRV:NPNBC}. Also in \cite{Du&Gu&Le&Zh12,Du&Gu&Le&Zh13}, Du-Gunzburger-Lehoucq-Zhou introduced volume constraints for a general class of nonlocal diffusion problems on bounded domain in $\mathbf R^n$ via a nonlocal vector calculus. If we rewrite (\ref{e:ns}) by a nonlocal vector calculus for fractional Laplacian, then $\mathcal N_s u=0$ (with some modifications) can be considered as a special case of the volume constraints defined by \cite{Du&Gu&Le&Zh12,Du&Gu&Le&Zh13}.

 Other types of Neumann problems for fractional Laplacian (or other nonlocal operators) were investigated in many works \cite{Bogdan&Burdzy&Chen03,Chen&Kim02,Bar&Cha&Geo&Jak14,Bar&Geo&Jak14,Cor&Elg&Ros&Wol07,Cor&Elg&Ros&Wol08,Grubb14}.
 All these conditions also have probabilistic interpretations and recover the classical Neumann problem as a limit case. A comparison between these models and ours can be found in \cite[Section 7]{DRV:NPNBC}.

The singularly perturbed Neumann problem for classical nonlinear Schr\"odinger equations with subcritical exponent is as follows:
\begin{equation}\label{e:classical}
\left\{\begin{array}{ll}
         -\varepsilon^2\Delta u+u=u^p & \mbox{ in }\Omega, \\
         \partial_{\nu}u=0 & \mbox{ on } \partial\Omega,\\
         u>0 & \mbox{ in }\Omega,
       \end{array}
\right.
\end{equation}
where $1<p<\frac{n+2}{n-2}$ for $n\ge 3$ and $p>1$ for $n=2$, and $\nu$ is the unit outward normal on $\partial \Omega$. There is a great deal of works on this problem. We only restrict ourselves to cite a few papers, referring to the bibliography for further references.
The pioneer works by Lin, Ni and Tagaki \cite{Ni&Takagi86,Lin&Ni&Takagi88,Ni&Takagi91,Ni&Takagi93} proved the existence of single-peak spike layer solution $u_{\varepsilon}$ to (\ref{e:classical}). After that many interesting results concerning multi-peak spike-layer solutions to (\ref{e:classical}) have been obtained (\cite{Gui&Wei99,Gui&Wei00,Gui&Wei&Winter00}). Note that a spike-layer solution has its energy or mass concentrating near isolated points (a zero-dimensional set) in $\bar\Omega$. Similarly, there exist solutions to (\ref{e:classical}) with $k$-dimensional ($1\le k\le n-1$) concentration set (\cite{Malchiodi&Montenegro02,Malchiodi&Montenegro04,AMN:SPEE,Amb&Mal&Ni04,DP&Kow&Wei07}). We refer to \cite{Ni11} for more other results and references. Moreover, concentrating standing waves with a critical frequency for Schr\"odinger equations were obtained by \cite{BW:SWCF,byeon2006spherical}.

We should also mention that concentration phenomenon for fractional Schr\"odinger equations has been extensively studied recently. On the total space $\mathbf R^n$, the existence and multiplicity of spike layer solutions under various conditions were obtained by \cite{DDW:CSWFSE,CZ:CPFSE,chen2015multiple,FaMaVa14}. On a bounded domain in $\mathbf R^n$, singularly perturbed Dirichlet problem was investigated by \cite{DaDPDiVa14}. Moreover, under classical Neumann condition, an existence result of spike solutions to Schr\"odinger equations involving half Laplacian (see Equation (\ref{e:classical2}) below) was proved by \cite{stinga2015fractional}.

We are now in a position to formulate our main results and give the idea of the proofs.
Our problem (\ref{e:np}) has a variational structure. More precisely, let
\begin{eqnarray}\label{e:inner-product}
\langle u,v\rangle_{H^s_{\varepsilon,\Omega}}=\frac{C_{n,s}\varepsilon^{2s}}{2}\int_{\mathbf R^{2n}\setminus(\Omega^c)^2}\frac{(u(x)-u(y))(v(x)-v(y))}{|x-y|^{n+2s}}dxdy+\int_{\Omega}uvdx,
\end{eqnarray}
where $\Omega^c:=\mathbf R^n\setminus \Omega$.
Then define the space
\begin{eqnarray*}
H^s_{\varepsilon,\Omega}=\{u:\mathbf R^n\to \mathbf R \mbox{ measurable and } \langle u,u\rangle_{H^s_{\varepsilon,\Omega}}<\infty\}.
\end{eqnarray*}
It is a Hilbert space with the norm $\|\cdot\|_{H^s_{\varepsilon,\Omega}}=\langle \cdot,\cdot\rangle_{H^s_{\varepsilon,\Omega}}^{\frac{1}{2}}$.
It follows that weak solutions to the problem (\ref{e:np}) are critical points of the following functional
\begin{eqnarray}\label{e:functional-i}
I_{\varepsilon}(u)=\frac{C_{n,s}\varepsilon^{2s}}{4}\int_{\mathbf R^{2n}\setminus(\Omega^c)^2}\frac{|u(x)-u(y)|^2}{|x-y|^{n+2s}}dxdy+\frac{1}{2}\int_{\Omega}u^2dx-
\frac{1}{p+1}\int_{\Omega}|u|^{p+1}dx.
\end{eqnarray}
We obtain the following existence result.
\begin{theorem}\label{t:np}
If $\varepsilon$ is sufficiently small, then there exists a nonnegative weak solution $u_{\varepsilon}$ to (\ref{e:np}). Moreover, $u_{\varepsilon}$ satisfies
\begin{equation*}
0<I_{\varepsilon}(u_{\varepsilon})\le C_1\varepsilon^n.
\end{equation*}
 Consequently, $u_{\varepsilon}$ is a nonconstant solution, and,
\begin{equation}\label{e:solution-norm}
\|u_{\varepsilon}\|_{H^s_{\varepsilon,\Omega}}\le C_2\varepsilon^{\frac{n}{2}}.
\end{equation}
Here $C_1, C_2$ are two positive constants depending only on $n,s,p,\Omega$.
\end{theorem}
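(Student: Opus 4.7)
Produce $u_\varepsilon$ as a mountain pass critical point of a modified functional $J_\varepsilon$ that enforces nonnegativity, then bound the mountain pass level from above by evaluating $J_\varepsilon$ on a scaled bump function supported in $\Omega$, whose natural scaling accounts for the $\varepsilon^n$ factor.

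\textbf{Mountain pass for a nonnegative solution.} Let $J_\varepsilon$ denote $I_\varepsilon$ with $|u|^{p+1}$ replaced by $(u^+)^{p+1}$, which is $C^1$ on $H^s_{\varepsilon,\Omega}$. The pointwise relation $u^+u^-\equiv 0$ yields $(u(x)-u(y))(u^-(x)-u^-(y))\le -(u^-(x)-u^-(y))^2$, so testing $\langle J_\varepsilon'(u),u^-\rangle=0$ against $u^-$ forces $\|u^-\|_{H^s_{\varepsilon,\Omega}}=0$; hence critical points of $J_\varepsilon$ are nonnegative weak solutions of (\ref{e:np}). Restriction to $\Omega$ followed by the fractional Sobolev embedding provides a compact inclusion $H^s_{\varepsilon,\Omega}\hookrightarrow L^{p+1}(\Omega)$ (using $p+1<2n/(n-2s)$ and the smoothness of $\partial\Omega$); this delivers the mountain pass geometry ($J_\varepsilon(0)=0$, $J_\varepsilon\ge\alpha>0$ on a small sphere, $J_\varepsilon(tv)\to-\infty$ for any fixed nontrivial $v\ge 0$), while Ambrosetti--Rabinowitz boundedness of Palais--Smale sequences combined with the compact embedding verifies the (PS) condition. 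The mountain pass theorem then produces a critical point $u_\varepsilon\ge 0$ with $I_\varepsilon(u_\varepsilon)=c_\varepsilon>0$.

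\textbf{Upper bound on the mountain pass level.} Fix $x_0\in\Omega$ and $\phi\in C_c^\infty(B_1(0))$ with $\phi\ge 0$, $\phi\not\equiv 0$; set $\phi_\varepsilon(x):=\phi((x-x_0)/\varepsilon)$. For sufficiently small $\varepsilon$, $\phi_\varepsilon\in C_c^\infty(\Omega)$, and changing variables $x=x_0+\varepsilon\xi$, $y=x_0+\varepsilon\eta$ gives
\[
\int_\Omega\phi_\varepsilon^k\,dx=\varepsilon^n\int_{\mathbf R^n}\phi^k\,d\xi\quad(k=2,\,p+1),\qquad \varepsilon^{2s}\iint_{\mathbf R^{2n}}\frac{|\phi_\varepsilon(x)-\phi_\varepsilon(y)|^2}{|x-y|^{n+2s}}\,dx\,dy=\varepsilon^n[\phi]_{H^s(\mathbf R^n)}^2.
\]
Because $\phi_\varepsilon$ vanishes on $\Omega^c$, the Gagliardo integral over $\mathbf R^{2n}\setminus(\Omega^c)^2$ coincides with that over $\mathbf R^{2n}$, and consequently $J_\varepsilon(t\phi_\varepsilon)=\varepsilon^n g(t)$ with $g$ independent of $\varepsilon$ and $\max_{t\ge 0}g(t)=:C_1<\infty$. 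Using the straight-line path $t\mapsto tT\phi_\varepsilon$ with $T$ chosen so large that $J_\varepsilon(T\phi_\varepsilon)<0$, we obtain $c_\varepsilon\le C_1\varepsilon^n$.

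\textbf{Nonconstancy, norm bound, and main obstacle.} Since $I_\varepsilon(u_\varepsilon)>0$, certainly $u_\varepsilon\not\equiv 0$; and if $u_\varepsilon$ were a nonzero constant $c$ on $\mathbf R^n$ then the equation would force $c^p=c$ with $c\ge 0$, i.e.\ $c=1$, whence $I_\varepsilon(u_\varepsilon)=(\tfrac12-\tfrac{1}{p+1})|\Omega|$, a positive constant exceeding $C_1\varepsilon^n$ for $\varepsilon$ small, contradicting the energy bound. The identity $\langle I_\varepsilon'(u_\varepsilon),u_\varepsilon\rangle=0$ yields $I_\varepsilon(u_\varepsilon)=(\tfrac12-\tfrac{1}{p+1})\|u_\varepsilon\|_{H^s_{\varepsilon,\Omega}}^2$, which together with $c_\varepsilon\le C_1\varepsilon^n$ gives (\ref{e:solution-norm}). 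The only nonroutine ingredient is the (PS) verification on the nonstandard space $H^s_{\varepsilon,\Omega}$: here the nonlocal cross terms between $\Omega$ and $\Omega^c$ only strengthen the norm and do not damage the compact embedding into $L^{p+1}(\Omega)$, so the classical Ambrosetti--Rabinowitz argument transfers without modification.
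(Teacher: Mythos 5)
Your proposal is correct, and it departs from the paper's proof in two meaningful ways. First, for nonnegativity, you modify the functional to $J_\varepsilon$ with nonlinearity $(u^+)^{p+1}$ and show \emph{a priori} that any critical point satisfies $u^-=0$ by testing $J_\varepsilon'(u)$ against $u^-$ and using the pointwise inequality $(u(x)-u(y))(u^-(x)-u^-(y))\le -(u^-(x)-u^-(y))^2$; the paper instead runs the mountain pass for $I_\varepsilon$ itself and establishes nonnegativity \emph{a posteriori}, invoking a strict Kato-type inequality $I_\varepsilon(|u_\varepsilon|)<I_\varepsilon(u_\varepsilon)$ for sign-changing $u_\varepsilon$ and constructing an explicit competitor path $\gamma_0$ in the cone spanned by $e$ and $t_2|u_\varepsilon|$ to reach a contradiction with the mountain pass level. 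Your route is the more standard one and avoids the need to argue carefully about strictness of the Kato inequality on the set $\mathbf R^{2n}\setminus(\Omega^c)^2$. Second, for the energy upper bound, you pick an unrescaled bump $\phi_\varepsilon(x)=\phi((x-x_0)/\varepsilon)$ supported in $\Omega$, so that all three terms of $J_\varepsilon(t\phi_\varepsilon)$ scale uniformly as $\varepsilon^n g(t)$ and the bound $c_\varepsilon\le C_1\varepsilon^n$ drops out of a single change of variables; the paper instead uses the amplitude-rescaled tent function $\varepsilon^{-n}(1-\varepsilon^{-1}|x|)_+$ and performs a sequence of direct integral estimates on $T_1$, $T_2$ and the $\Omega\times\Omega^c$ cross term before tracking $t_1,t_2\sim\varepsilon^n$. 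Your observation that the Gagliardo integral over $\mathbf R^{2n}\setminus(\Omega^c)^2$ equals the full Gagliardo seminorm because $\phi_\varepsilon$ vanishes on $\Omega^c$ is the key simplification and is correct. The remaining steps (compact embedding $H^s_{\varepsilon,\Omega}\hookrightarrow H^s(\Omega)\hookrightarrow L^{p+1}(\Omega)$, Ambrosetti--Rabinowitz boundedness of (PS) sequences, the energy identity $I_\varepsilon(u_\varepsilon)=(\tfrac12-\tfrac{1}{p+1})\|u_\varepsilon\|_{H^s_{\varepsilon,\Omega}}^2$, and ruling out $u_\varepsilon\equiv 1$) coincide with the paper's.
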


\begin{remark}
The definition of weak solution is given by integrals (see Definition \ref{e:weak-solutions} below). Using nonlocal integration by parts formulas, we see that this definition is similar to the classical case (see Remark \ref{r:weak-s} below).
\end{remark}
\begin{remark}
Such kind of existence results for classical Neumann problem (\ref{e:classical}) was obtained by Lin, Ni and Takagi \cite{Lin&Ni&Takagi88}. Recently, in \cite{stinga2015fractional}, Stinga and Volzone recovered the results (including concentration and regularity issues) in \cite{Lin&Ni&Takagi88} for a fractional semilinear Neumann problem as follows:
\begin{equation}\label{e:classical2}
\left\{\begin{array}{ll}
         \varepsilon(-\Delta)^{\frac{1}{2}} u+u=u^p & \mbox{ in }\Omega, \\
         \partial_{\nu}u=0 & \mbox{ on } \partial\Omega,\\
         u>0 & \mbox{ in }\Omega,
       \end{array}
\right.
\end{equation}
where $p\in (1,\frac{n+1}{n-1})$, $\nu$ is the outer unit normal to $\partial \Omega$. Note that both of the boundary conditions in (\ref{e:classical}) and (\ref{e:classical2}) are classical.
\end{remark}

The proof of Theorem \ref{t:np} relies on critical point theory. More precisely, the functional $I_{\varepsilon}$ has mountain pass structure. The key point is to construct an appropriate function $\phi\in H^s_{\varepsilon,\Omega}$ such that, for some $t_0>0$, it holds that  $I_{\varepsilon}(t_0\phi)\le 0$ and $0<\sup_{t\in[0,t_0]}I_{\varepsilon}(t\phi)\le C\varepsilon^n$ ($C$ is a constant depending on $n,s,\Omega$). As compared with the classical case, verifying the necessary properties of $\phi$ becomes more involved because of the fractional Laplacian. See Section \ref{s:mountain-pass} below.

Moreover, we investigate the integrability properties of the solutions to problem (\ref{e:np}). We have the following theorem.
\begin{theorem}\label{t:integrability}
Let $u\in H^s_{\varepsilon,\Omega}$. Then it holds that
\begin{enumerate}
  \item[(1)] $u\in L^2_{{\rm loc}}(\mathbf R^n)$,
  \item[(2)] if $\mathcal N_s(u)=0$, then
\begin{equation}\label{e:intgrality-sil}
\int_{\mathbf R^n}\frac{|u(x)|}{1+|x|^{n+2s}}dx<+\infty.
\end{equation}
\end{enumerate}
\end{theorem}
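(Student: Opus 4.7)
My plan is to establish (1) first and then feed it into (2) via a pointwise mean-value identity coming from the Neumann condition.

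For (1), I exploit the fact that the Gagliardo-type seminorm inside $\|\cdot\|_{H^s_{\varepsilon,\Omega}}$ is integrated over $\mathbf R^{2n}\setminus(\Omega^c)^2$, which still couples every point in $\mathbf R^n$ with the interior of $\Omega$. Fix once and for all a ball $B_0\Subset\Omega$. For any compact set $K\subset\mathbf R^n$, the quantity $|x-y|$ is bounded above on $K\times B_0$ by some $M=M(K,B_0)$, and $K\times B_0\subset\mathbf R^{2n}\setminus(\Omega^c)^2$, so
$$\int_K\!\int_{B_0}(u(x)-u(y))^2\,dy\,dx\le M^{n+2s}\!\int_K\!\int_{B_0}\frac{(u(x)-u(y))^2}{|x-y|^{n+2s}}\,dx\,dy\le C(K)\,\|u\|_{H^s_{\varepsilon,\Omega}}^2.$$
Combining with $u(x)^2\le 2(u(x)-u(y))^2+2u(y)^2$, integrating over $y\in B_0$, dividing by $|B_0|$, and using $u\in L^2(\Omega)\supset L^2(B_0)$ then yields $\int_K u^2\,dx<\infty$.

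For (2), I rewrite $\mathcal N_s u(x)=0$ on $\mathbf R^n\setminus\bar\Omega$ as the mean-value identity
$$u(x)=\frac{\int_\Omega u(y)\,|x-y|^{-(n+2s)}\,dy}{\int_\Omega|x-y|^{-(n+2s)}\,dy},\qquad x\in\mathbf R^n\setminus\bar\Omega,$$
which exhibits $u$ outside $\bar\Omega$ as a weighted average of $u|_\Omega$; the denominator is strictly positive and the numerator is absolutely convergent since $u\in L^1(\Omega)$ by~(1) and $\mathrm{dist}(x,\Omega)>0$. Fixing $R$ with $\bar\Omega\subset B_R$, for $|x|\ge 2R$ and $y\in\Omega$ one has $|x|/2\le|x-y|\le 3|x|/2$, so both integrals are of order $|x|^{-(n+2s)}$ and their ratio is bounded by a constant multiple of $\|u\|_{L^1(\Omega)}/|\Omega|$, producing a uniform estimate $|u(x)|\le C$ on $\{|x|\ge 2R\}$.

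To conclude, I split
$$\int_{\mathbf R^n}\frac{|u(x)|}{1+|x|^{n+2s}}\,dx=\int_{\Omega}+\int_{B_{2R}\setminus\Omega}+\int_{\mathbf R^n\setminus B_{2R}}.$$
On the first two pieces the weight $(1+|x|^{n+2s})^{-1}$ is bounded and $u\in L^1_{\rm loc}(\mathbf R^n)$ by~(1), so both integrals are finite; on the last piece, the uniform bound $|u|\le C$ combined with $\int_{|x|\ge 2R}|x|^{-(n+2s)}\,dx<\infty$ (since $2s>0$) closes the argument. The most delicate step is~(1): the specific Gagliardo region $\mathbf R^{2n}\setminus(\Omega^c)^2$ is what lets me transport $L^2$ information from $B_0$ out to an arbitrary compact set through the cross-interaction weight, and once~(1) is in place, (2) follows quickly from the mean-value identity.
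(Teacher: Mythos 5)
Your proposal is correct, and both parts take genuinely different (and arguably cleaner) routes than the paper.

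For part (1), the paper also reduces matters to the cross term $\int_\Omega\int_{\Omega^c}\frac{|u(x)-u(y)|^2}{|x-y|^{n+2s}}$, but then argues by contradiction: assuming $u\notin L^2(\Omega_\rho\cap\Omega^c)$, it expands the square, estimates the three resulting terms separately, and runs a dyadic level-set argument (sets $D_k=\{2^k<|u|\le 2^{k+1}\}$) to show that the quadratic term eventually dominates the cross term, forcing the Gagliardo energy to be infinite. Your argument replaces all of this with the elementary pointwise inequality $u(x)^2\le 2(u(x)-u(y))^2+2u(y)^2$ integrated against a fixed inner ball $B_0\Subset\Omega$, together with the observation that $K\times B_0\subset\mathbf R^{2n}\setminus(\Omega^c)^2$ and that $|x-y|$ is bounded on $K\times B_0$. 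This is shorter, avoids the contradiction, and immediately yields $u\in L^2(K)$ for every compact $K$. For part (2), the paper cites \cite[Proposition 3.13]{DRV:NPNBC} as a black box for boundedness of $u$ outside a large ball; you instead derive the mean-value representation $u(x)=\bigl(\int_\Omega u(y)|x-y|^{-(n+2s)}dy\bigr)/\bigl(\int_\Omega|x-y|^{-(n+2s)}dy\bigr)$ directly from $\mathcal N_s u=0$ and estimate both integrals by comparability $|x-y|\asymp|x|$ for $|x|\ge 2R$, which makes the argument self-contained. One tiny stylistic point: you invoke (1) to get $u\in L^1(\Omega)$, but this already follows from $u|_\Omega\in H^s(\Omega)\subset L^2(\Omega)$ on a bounded domain, so (1) is only truly needed for the annulus $B_{2R}\setminus\Omega$; the logic is still sound as written.
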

\begin{remark}
The second conclusion of this theorem implies that the nonnegative weak solution $u_{\varepsilon}$ obtained in Theorem \ref{t:np} is $L_s$ integrable in the sense of Silvestre \cite{Silvestre07}. We should note that if $\mathcal N_s(u)=0$, then $\lim_{|x|\to \infty}u(x)=\frac{1}{|\Omega|}\int_{\Omega}u(x)dx$ (\cite[Proposition 3.13]{DRV:NPNBC}). Therefore, in general, we can not expect to prove that $u_{\varepsilon}$ is integrable on the total space $\mathbf R^n$. In particular, $u_{\varepsilon}$ does not belong to the $s$-th order Sobolev space $H^s(\mathbf R^n)$. Hence it is too weak to obtain more regularity properties of $u_{\varepsilon}$ (see for example \cite{Silvestre07,Cabre&Sire12,jin2014fractional}).
\end{remark}

To prove Theorem \ref{t:integrability}, we need a detailed analysis of some singular integrals of the following form
\begin{equation*}
u[A,B]:=\int_{A}\int_{B}\frac{|u(x)-u(y)|^2}{|x-y|^{n+2s}}dydx,
\end{equation*}
where $A$ and $B$ are two measurable set in $\mathbf R^n$. Such kind of integral is also important in nonlocal minimal surfaces (see e.g. \cite{Caf&Roq&Sav10,Savin&Valdinoci11}). Since $u\in H^s_{\varepsilon,\Omega}$, $u(\Omega,\Omega)$ and $u(\Omega,\Omega^c)$ are finite. Then we can prove the local integrability of $u$ by choosing appropriate balls in $\mathbf R^n$. See Section \ref{s:properties} below.

\subsection*{Acknowledgements}
This work was supported in part by the National Natural Science Foundation of China (NSFC) (No. 11401521).


\section{Variational structure}\label{s:variational-structure}

The Neumann problem (\ref{e:np}) is variational.

Define the space
\begin{eqnarray*}
H^s_{\varepsilon,\Omega}=\{u:\mathbf R^n\to \mathbf R \mbox{ measurable and } \|u\|_{H^s_{\varepsilon,\Omega}}<\infty\}.
\end{eqnarray*}
Here $\|\cdot\|_{H^s_{\varepsilon,\Omega}}$ is given by (\ref{e:inner-product}).
We should note that
constant functions $v(x)\equiv c$ on $\mathbf R^n$ are contained in $H^s_{\varepsilon,\Omega}$.

\begin{remark}\label{r:sobolev-spaces}
Let $H^s(\Omega)$ be the $s$-th Sobolev space in $\Omega$ with the norm given by
\begin{eqnarray*}
\|h\|_{H^s(\Omega)}^2&=&\int_{\Omega}\int_{\Omega}\frac{|h(x)-h(y)|^2}{|x-y|^{n+2s}}dxdy+\int_{\Omega}h^2dx.
\end{eqnarray*}
(See, for example, \cite{Ad:SS}, \cite{DiNezza&Palatucci&Valdinoci12}.) Therefore, if $u\in H^s_{\varepsilon,\Omega}$, then $u|_{\Omega}$ is in $H^s(\Omega)$. We define $H^s(\mathbf R^n)$ to be the usual $s$-th Sobolev space in $\mathbf R^n$ with the norm
\begin{eqnarray*}
\|h\|_{H^s(\mathbf R^n)}^2&=&\int_{\mathbf R^n}\int_{\mathbf R^n}\frac{|h(x)-h(y)|^2}{|x-y|^{n+2s}}dxdy+\int_{\mathbf R^n}h^2dx.
\end{eqnarray*}
From (\ref{e:inner-product}), it holds that $H^s(\mathbf R^n)\subset H^s_{\varepsilon,\Omega}$.
\end{remark}
\begin{lemma}
$H^s_{\varepsilon,\Omega}$ is a Hilbert space with inner product given by (\ref{e:inner-product}).
\end{lemma}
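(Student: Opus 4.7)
The plan is to verify the Hilbert space axioms in two stages: first checking that $\langle\cdot,\cdot\rangle_{H^s_{\varepsilon,\Omega}}$ is a bona fide inner product, and then establishing completeness. Bilinearity and symmetry are immediate from inspection of (\ref{e:inner-product}). For positive definiteness, suppose $\langle u,u\rangle_{H^s_{\varepsilon,\Omega}}=0$. Then $u\equiv 0$ a.e.\ on $\Omega$ from the $L^2(\Omega)$ term, and the double integral forces $u(x)=u(y)$ for a.e.\ pair $(x,y)\in\mathbf R^{2n}\setminus(\Omega^c)^2$. Fixing any $y\in\Omega$ where $u(y)=0$ and letting $x$ range over $\Omega^c$, Fubini then forces $u\equiv 0$ a.e.\ on $\Omega^c$ as well.

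For completeness, let $\{u_k\}$ be Cauchy in $H^s_{\varepsilon,\Omega}$. Since $\Omega\times\Omega\subset \mathbf R^{2n}\setminus(\Omega^c)^2$, the restrictions $u_k|_\Omega$ form a Cauchy sequence in $H^s(\Omega)$, whose completeness (see Remark \ref{r:sobolev-spaces}) yields a limit $\tilde u\in H^s(\Omega)$, with a.e.\ convergence along a subsequence. To construct the limit on $\Omega^c$, I would fix a ball $B\Subset\Omega$ and use the averaging identity
\begin{equation*}
u_k(x)-u_j(x)=\frac{1}{|B|}\int_B\bigl[(u_k-u_j)(x)-(u_k-u_j)(y)\bigr]\,dy+\frac{1}{|B|}\int_B(u_k-u_j)(y)\,dy,
\end{equation*}
valid for every $x\in\Omega^c$. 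For any bounded measurable $A\subset\Omega^c$, Jensen's inequality combined with the fact that $|x-y|$ is uniformly bounded on $A\times B$ gives
\begin{equation*}
\int_A|u_k-u_j|^2\,dx\le \frac{C(A,B)}{|B|}\int_A\!\int_B\frac{|(u_k-u_j)(x)-(u_k-u_j)(y)|^2}{|x-y|^{n+2s}}\,dy\,dx+\frac{2|A|}{|B|}\int_B|u_k-u_j|^2\,dy,
\end{equation*}
and both right-hand terms tend to zero by the Cauchy hypothesis. Hence $u_k$ is Cauchy in $L^2_{\mathrm{loc}}(\Omega^c)$, so a subsequence converges a.e.\ on $\Omega^c$. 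Patching together yields a measurable $u:\mathbf R^n\to\mathbf R$ that is the pointwise a.e.\ limit.

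Finally, I would pass the Cauchy property to the limit via Fatou's lemma: for any fixed $k$,
\begin{equation*}
\|u-u_k\|_{H^s_{\varepsilon,\Omega}}^2\le \liminf_{j\to\infty}\|u_j-u_k\|_{H^s_{\varepsilon,\Omega}}^2,
\end{equation*}
which becomes arbitrarily small as $k\to\infty$. This simultaneously shows $u_k\to u$ in norm and $u\in H^s_{\varepsilon,\Omega}$ (writing $u=(u-u_k)+u_k$). The principal obstacle is the extension to $\Omega^c$: because constant functions lie in $H^s_{\varepsilon,\Omega}$ and contribute zero to the seminorm, one cannot hope for any global $L^2(\Omega^c)$ control on $u_k$; one must extract pointwise information locally by anchoring values of $u_k$ on $\Omega^c$ to averages over a reference ball inside $\Omega$, which is precisely the role of the identity above.
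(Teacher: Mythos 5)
The paper does not actually give a proof of this lemma; it simply refers to Proposition 3.1 of Dipierro--Ros-Oton--Valdinoci (the case $g=0$) and omits all details. Your argument therefore fills in what the paper leaves implicit, and it is correct. The checks of bilinearity, symmetry, and (via Fubini) definiteness are fine. For completeness, you correctly observe that since $\Omega\times\Omega\subset\mathbf R^{2n}\setminus(\Omega^c)^2$ the restrictions to $\Omega$ are Cauchy in $H^s(\Omega)$, and you correctly identify the genuine difficulty: the norm gives no direct control of $u$ on $\Omega^c$ (constants lie in the kernel of the seminorm), so one must transfer information from $\Omega$ to $\Omega^c$. Your averaging identity anchoring $u(x)$, $x\in\Omega^c$, to its mean over a fixed ball $B\Subset\Omega$, combined with Jensen and the fact that $|x-y|$ is bounded above on $A\times B$ for bounded $A\subset\Omega^c$, does exactly this, and the cross integral $\int_{\Omega^c}\int_\Omega$ is part of the seminorm because $\Omega^c\times\Omega\subset\mathbf R^{2n}\setminus(\Omega^c)^2$. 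The final Fatou step is the standard device to show both that the a.e.\ limit lies in the space and that convergence holds in norm. Two very minor points you could tighten: the a.e.\ convergence on $\Omega^c$ should be extracted via a diagonal subsequence over an exhaustion of $\Omega^c$ by bounded sets, and in the Fatou inequality the $\liminf$ is taken along that subsequence, which is still dominated by $\sup_{j\ge k}\|u_j-u_k\|^2_{H^s_{\varepsilon,\Omega}}\to 0$. Neither affects the validity of the proof.
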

\begin{proof}
This lemma is the case $g=0$ of Proposition 3.1 in \cite{DRV:NPNBC}. We omit details of the proof here.
\end{proof}

\begin{remark}
\emph{From Remark \ref{r:sobolev-spaces} and Sobolev embedding $H^s(\Omega)\hookrightarrow L^q(\Omega)$ ($q\in (1,\frac{2n}{n-2s})$), we have that, for all $u\in H^s_{\varepsilon,\Omega}$,
$$\int_{\Omega}|u|^{p+1}dx<+\infty.$$}
\end{remark}

\begin{dfn}
\emph{We say that $u\in H^s_{\varepsilon,\Omega}$ is a weak solution to (\ref{e:np}), if, for all $v\in H^s_{\varepsilon,\Omega}$, it holds that
\begin{equation}\label{e:weak-solutions}
\frac{C_{n,s}\varepsilon^{2s}}{2}\int_{\mathbf R^{2n}\setminus(\Omega^c)^2}\frac{(u(x)-u(y))(v(x)-v(y))}{|x-y|^{n+2s}}dxdy+\int_{\Omega}uvdx-\int_{\Omega}|u|^{p-1}uvdx=0.
\end{equation}}
\end{dfn}
\begin{remark}\label{r:weak-s}
A direct computation yields that
for all $u,\,v\in C^2\cap H^s_{\varepsilon,\Omega}$,
\begin{equation}\label{e:green-identity}
\frac{1}{2}\int_{\mathbf R^{2n}\setminus(\Omega^c)^2}\frac{(u(x)-u(y))(v(x)-v(y))}{|x-y|^{n+2s}}dxdy=\int_{\Omega}v(-\Delta)^su dx+\int_{\Omega^c}v\mathcal N_sudx.
\end{equation}
This is corresponding to the classical Green's first identity.
Thus
the definition of weak solution is the same as the classical case.
That is, from (\ref{e:green-identity}), (\ref{e:weak-solutions}) can formally become
\begin{eqnarray*}
\int_{\Omega}\left(\varepsilon^{2s}(-\Delta)^su+u-|u|^{p-1}u\right)vdx+\varepsilon^{2s}\int_{\Omega^c}v\mathcal N_sudx=0.
\end{eqnarray*}
\end{remark}

\begin{prop}
Any critical point of $I_{\varepsilon}$ (see (\ref{e:functional-i})) is a weak solution of problem (\ref{e:np}).
\end{prop}
\begin{proof}
For any $v\in H^s_{\varepsilon,\Omega}$, we have that
\begin{eqnarray*}
&&I_{\varepsilon}(u+tv)\\
&=&\frac{C_{n,s}\varepsilon^{2s}}{4}\int_{\mathbf R^{2n}\setminus(\Omega^c)^2}\frac{|(u+tv)(x)-(u+tv)(y)|^2}{|x-y|^{n+2s}}dxdy\\
&&\quad\quad\quad\quad\quad\quad\quad\quad+\int_{\Omega}(u+tv)^2dx-
\frac{1}{p+1}\int_{\Omega}|u+tv|^{p+1}dx\\
&=&I_{\varepsilon}(u)\\
&&+t\left(C_{n,s}\varepsilon^{2s}\int_{\mathbf R^{2n}\setminus(\Omega^c)^2}\frac{(u(x)-u(y))(v(x)-v(y))}{|x-y|^{n+2s}}dxdy+\int_{\Omega}uvdx-\int_{\Omega}|u|^{p-1}uvdx\right)\\
&&+t^2\left(\frac{C_{n,s}\varepsilon^{2s}}{4}\int_{\mathbf R^{2n}\setminus(\Omega^c)^2}\frac{|v(x)-v(y)|^2}{|x-y|^{n+2s}}dxdy+\int_{\Omega}v^2dx+p\int_{\Omega}|u+\theta_1tv|^{p-1}v^2dx\right),
\end{eqnarray*}
where $\theta_1\in (0,1)$. Thus
\begin{eqnarray}\label{e:derivative-0}
I'_{\varepsilon}(u)v&=&\lim_{t\to 0}\frac{I_{\varepsilon}(u+tv)-I_{\varepsilon}(u)}{t}\notag\\
&=&C_{n,s}\varepsilon^{2s}\int_{\mathbf R^{2n}\setminus(\Omega^c)^2}\frac{(u(x)-u(y))(v(x)-v(y))}{|x-y|^{n+2s}}dxdy+\int_{\Omega}uvdx-\int_{\Omega}|u|^{p-1}uvdx\notag\\
&=& \langle u,v\rangle_{H^s_{\varepsilon,\Omega}}-\int_{\Omega}|u|^{p-1}uvdx.
\end{eqnarray}
Therefore, if $u$ is a critical point of $I_{\varepsilon}$, then $u$ is a weak solution to (\ref{e:np}).
\end{proof}


\section{Proof of Theorem \ref{t:np}}\label{s:mountain-pass}

\begin{lemma}
$I_{\varepsilon}$ satisfies Palais-Smale condition.
\end{lemma}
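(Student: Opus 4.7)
The plan is to follow the standard verification of the Palais--Smale condition for subcritical semilinear functionals, adapted to the nonlocal setting with integration region $\mathbf R^{2n}\setminus(\Omega^c)^2$ and with $L^2$, $L^{p+1}$ terms supported on $\Omega$. Let $(u_k)\subset H^s_{\varepsilon,\Omega}$ be a sequence with $I_\varepsilon(u_k)\to c$ and $\|I'_\varepsilon(u_k)\|_{(H^s_{\varepsilon,\Omega})^*}\to 0$; I want to extract a strongly convergent subsequence.

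\smallskip
\textbf{Step 1 (boundedness).} Using the derivative formula (\ref{e:derivative-0}), I compute
\[
(p+1)I_\varepsilon(u_k)-I'_\varepsilon(u_k)u_k=\left(\tfrac{p+1}{2}-1\right)\|u_k\|^2_{H^s_{\varepsilon,\Omega}}=\tfrac{p-1}{2}\|u_k\|^2_{H^s_{\varepsilon,\Omega}}.
\]
The left side is bounded by $C(1+\|u_k\|_{H^s_{\varepsilon,\Omega}})$ for $k$ large, and $p>1$, so $(u_k)$ is bounded in $H^s_{\varepsilon,\Omega}$.

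\smallskip
\textbf{Step 2 (weak and compact subsequences).} Since $H^s_{\varepsilon,\Omega}$ is Hilbert, up to a subsequence $u_k\rightharpoonup u$ weakly. Observing that $\Omega\times\Omega\subset \mathbf R^{2n}\setminus(\Omega^c)^2$, the restrictions $u_k|_\Omega$ are bounded in $H^s(\Omega)$ (cf.\ Remark \ref{r:sobolev-spaces}). By the compact Sobolev embedding $H^s(\Omega)\hookrightarrow L^q(\Omega)$ for $q\in[1,\tfrac{2n}{n-2s})$, a further subsequence converges strongly in $L^2(\Omega)$ and in $L^{p+1}(\Omega)$; here the strict inequality $p+1<\tfrac{2n}{n-2s}$ is equivalent to the subcriticality $p<\tfrac{n+2s}{n-2s}$ assumed throughout.

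\smallskip
\textbf{Step 3 (weak $\Rightarrow$ strong in $H^s_{\varepsilon,\Omega}$).} Testing the derivative with $v=u_k-u$ and using (\ref{e:derivative-0}) gives
\[
\langle u_k,u_k-u\rangle_{H^s_{\varepsilon,\Omega}}=I'_\varepsilon(u_k)(u_k-u)+\int_\Omega |u_k|^{p-1}u_k(u_k-u)\,dx.
\]
The first term on the right vanishes since $I'_\varepsilon(u_k)\to 0$ in the dual and $(u_k-u)$ is bounded in $H^s_{\varepsilon,\Omega}$. For the nonlinear term, H\"older's inequality with exponents $\tfrac{p+1}{p}$ and $p+1$ bounds it by $\||u_k|^p\|_{L^{(p+1)/p}(\Omega)}\|u_k-u\|_{L^{p+1}(\Omega)}$, which tends to zero by the strong $L^{p+1}(\Omega)$ convergence and boundedness of $(u_k)$. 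Hence $\langle u_k,u_k-u\rangle_{H^s_{\varepsilon,\Omega}}\to 0$, and weak convergence yields $\langle u,u_k-u\rangle_{H^s_{\varepsilon,\Omega}}\to 0$; subtracting gives $\|u_k-u\|_{H^s_{\varepsilon,\Omega}}\to 0$.

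\smallskip
\textbf{Main obstacle.} The only nontrivial point is the compactness in $L^{p+1}(\Omega)$ used in Step 3, which relies on controlling the restriction $u_k|_\Omega$ in $H^s(\Omega)$ purely from the larger-domain Gagliardo seminorm appearing in $\|\cdot\|_{H^s_{\varepsilon,\Omega}}$, together with the strict subcriticality $p<\frac{n+2s}{n-2s}$. Both features are built into the functional setting, so the Palais--Smale property follows without further effort.
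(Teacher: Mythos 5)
Your proof is correct and follows essentially the same strategy as the paper's: establish boundedness of the Palais--Smale sequence via the Ambrosetti--Rabinowitz combination of $I_\varepsilon$ and $I'_\varepsilon(u_k)u_k$, pass to a weakly convergent subsequence, use the compact embedding $H^s(\Omega)\hookrightarrow L^{p+1}(\Omega)$ (valid by subcriticality) to get strong convergence of the nonlinear term, and then upgrade weak convergence to strong convergence in $H^s_{\varepsilon,\Omega}$ via the derivative identity. The only cosmetic differences are that you phrase the boundedness step as the single identity $(p+1)I_\varepsilon(u_k)-I'_\varepsilon(u_k)u_k=\tfrac{p-1}{2}\|u_k\|^2$ rather than the paper's two separate bounds, and in the last step you pair $u_k$ against $u_k-u$ and invoke weak convergence for the cross term, whereas the paper works directly with $\langle I'_\varepsilon(u_m)-I'_\varepsilon(u),u_m-u\rangle$; both are standard and equivalent.
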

\begin{proof}
Let $\{u_m\}\subset H^s_{\varepsilon,\Omega}$ be a Palais-Smale sequence such that $|I_{\varepsilon}(u_m)|\le d$, for all $m\in \mathbf N$, is bounded and $I_{\varepsilon}'(u_m)\to 0$.
Then
\begin{eqnarray}\label{e:udb}
d\ge \left|\frac{1}{2}\|u_m\|_{H^s_{\varepsilon,\Omega}}^2-\frac{1}{p+1}\int_{\Omega}|u_m|^{p+1}dx\right|.
\end{eqnarray}
Since $I_{\varepsilon}'(u_m)\to 0$, for any $\epsilon>0$, there is an $M=M(\epsilon)$ such that for all $m\ge M$,
\begin{eqnarray}\label{e:fdvb}
&&|I_{\varepsilon}'(u_m)v|\\
&=&\left|\left(\frac{C_{n,s}\varepsilon^{2s}}{2}\int_{\mathbf R^{2n}\setminus(\Omega^c)^2}
\frac{(u_m(x)-u_m(y))(v(x)-v(y))}{|x-y|^{n+2s}}dxdy+\int_{\Omega}u_m vdx\right)\right.\notag\\
&&\quad\quad\quad\quad\quad\quad\quad\quad\quad\quad\quad\quad\quad\quad\left.-\int_{\Omega}|u_m|^{p-1}u_mvdx\right|\notag\\
&=&\left|\langle u_m,v\rangle_{H^s_{\varepsilon,\Omega}}-\int_{\Omega}|u_m|^{p-1}u_mvdx\right|\le\epsilon\|v\|_{H^s_{\varepsilon,\Omega}},\notag
\end{eqnarray}
for all $v\in H^s_{\varepsilon,\Omega}$. If we choose $\epsilon=1$, $v=u_m$, then (\ref{e:fdvb}) yields
\begin{eqnarray}\label{e:umbi}
\left|\int_{\Omega}|u_m|^{p+1}dx\right|\le \|u_m\|_{H^s_{\varepsilon,\Omega}}^2+\|u_m\|_{H^s_{\varepsilon,\Omega}}.
\end{eqnarray}
By (\ref{e:umbi}) and (\ref{e:udb}), we have that
\begin{eqnarray*}
d\ge \left(\frac{1}{2}-\frac{1}{p+1}\right)\|u_m\|_{H^s_{\varepsilon,\Omega}}^2-\frac{1}{p+1}\|u_m\|_{H^s_{\varepsilon,\Omega}}.
\end{eqnarray*}
Therefore, $\{u_m\}$ is bounded in $H^s_{\varepsilon,\Omega}$.
Up to a subsequence, we assume that $u_m\rightharpoonup u$ in $H^s_{\varepsilon,\Omega}$. By Remark \ref{r:sobolev-spaces} and Sobolev embedding, $u_m\to u$ in $L^{p+1}(\Omega)$. So $|u_m|^{p-1}u_m\to |u|^{p-1}u$ in $L^{(p+1)/p}(\Omega)$. Equation (\ref{e:derivative-0}) yields that
\begin{eqnarray*}
\|u_m-u\|_{H^s_{\varepsilon,\Omega}}^2&=& \langle I_{\varepsilon}'(u_m)-I_{\varepsilon}'(u),u_m-u\rangle_{H^s_{\varepsilon,\Omega}}\\
&&+\int_{\Omega}(|u_m|^{p-1}u_m-|u|^{p-1}u)(u_m-u).
\end{eqnarray*}
Since $\{u_m\}$ is bounded, $u_m\rightharpoonup u$ in $H^s_{\varepsilon,\Omega}$ and $I_{\varepsilon}'(u_m)\to 0$, we have that
\begin{eqnarray*}
\langle I_{\varepsilon}'(u_m)-I_{\varepsilon}'(u),u_m-u\rangle_{H^s_{\varepsilon,\Omega}}\to 0, \quad\mbox{as } m\to \infty.
\end{eqnarray*}
By H\"older inequality, it holds that
\begin{eqnarray*}
&&\left|\int_{\Omega}\left(u_m|^{p-1}u_m-|u|^{p-1}u\right)(u_m-u)\right|\\
&\le& \left\||u_m|^{p-1}u_m-|u|^{p-1}u\right\|_{L^{(p+1)/p}(\Omega)}\|u_m-u\|_{L^{p+1}(\Omega)}\to 0,
\end{eqnarray*}
as $m\to \infty$. Therefore, we have that $\|u_m-u\|_{H^s_{\varepsilon,\Omega}}\to 0$ as $m\to \infty$.
It completes the proof.
\end{proof}

\begin{lemma}\label{l:positive}
There exists a $\rho>0$ such that $I_{\varepsilon}(u)>0$ if $0<\|u\|_{H^s_{\varepsilon,\Omega}}<\rho$ and $I_{\varepsilon}(u)\ge \beta>0$ if $\|u\|_{H^s_{\varepsilon,\Omega}}=\rho$.
\end{lemma}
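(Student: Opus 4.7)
The plan is to recognize standard mountain-pass geometry at the origin: the quadratic part of $I_{\varepsilon}$ dominates the superlinear part for small $\|u\|_{H^s_{\varepsilon,\Omega}}$. First I would rewrite
\[
I_{\varepsilon}(u)=\tfrac{1}{2}\|u\|_{H^s_{\varepsilon,\Omega}}^{2}-\tfrac{1}{p+1}\int_{\Omega}|u|^{p+1}dx,
\]
which is immediate from the definitions (\ref{e:inner-product}) and (\ref{e:functional-i}). Thus the whole task reduces to bounding the nonlinear term by a power of the ambient norm.

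The next step is to pass from the $L^{p+1}(\Omega)$ norm to $\|\cdot\|_{H^s_{\varepsilon,\Omega}}$ via $H^s(\Omega)$. Since $\Omega\times\Omega\subset\mathbf R^{2n}\setminus(\Omega^c)^2$, a direct comparison of integrands yields
\[
\|u\|_{H^s(\Omega)}^{2}\le\left(\frac{2}{C_{n,s}\varepsilon^{2s}}+1\right)\|u\|_{H^s_{\varepsilon,\Omega}}^{2}.
\]
Because $p+1\in(2,\tfrac{2n}{n-2s})$, Remark \ref{r:sobolev-spaces} combined with the fractional Sobolev embedding $H^s(\Omega)\hookrightarrow L^{p+1}(\Omega)$ gives a constant $C_S=C_S(n,s,p,\Omega)>0$ with
\[
\left(\int_{\Omega}|u|^{p+1}dx\right)^{\!\!2/(p+1)}\le C_S\|u\|_{H^s(\Omega)}^{2}.
\]
Chaining the two inequalities produces a constant $C=C(\varepsilon,n,s,p,\Omega)>0$ such that $\int_{\Omega}|u|^{p+1}dx\le C\|u\|_{H^s_{\varepsilon,\Omega}}^{p+1}$.

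Inserting this into the expression for $I_{\varepsilon}$ gives
\[
I_{\varepsilon}(u)\ge\|u\|_{H^s_{\varepsilon,\Omega}}^{2}\left[\tfrac{1}{2}-\tfrac{C}{p+1}\|u\|_{H^s_{\varepsilon,\Omega}}^{p-1}\right].
\]
Since $p>1$, the bracket stays positive as long as $\|u\|_{H^s_{\varepsilon,\Omega}}^{p-1}<\tfrac{p+1}{2C}$. I would then fix $\rho:=\bigl(\tfrac{p+1}{4C}\bigr)^{1/(p-1)}$: for $0<\|u\|_{H^s_{\varepsilon,\Omega}}<\rho$ the bracket is at least $\tfrac{1}{4}$, so $I_{\varepsilon}(u)>0$; and on the sphere $\|u\|_{H^s_{\varepsilon,\Omega}}=\rho$ one obtains $I_{\varepsilon}(u)\ge\tfrac{1}{4}\rho^{2}=:\beta>0$. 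No genuine obstacle arises; the only point deserving attention is the comparison of the two Gagliardo seminorms, which is handled by the set inclusion $\Omega\times\Omega\subset\mathbf R^{2n}\setminus(\Omega^{c})^{2}$, and the observation that $\rho$ and $\beta$ are allowed to depend on $\varepsilon$.
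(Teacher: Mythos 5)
Your proof is correct and follows essentially the same route as the paper: restrict to $\Omega$, use the containment of Gagliardo seminorms coming from $\Omega\times\Omega\subset\mathbf R^{2n}\setminus(\Omega^c)^2$ together with the Sobolev embedding $H^s(\Omega)\hookrightarrow L^{p+1}(\Omega)$ to get $\int_\Omega|u|^{p+1}\,dx\le C\|u\|_{H^s_{\varepsilon,\Omega}}^{p+1}$, and then invoke the usual small-ball geometry for a superquadratic nonlinearity. You merely supply the details the paper suppresses (the paper even drops the constant $C$ in its display), and you correctly flag that $C$, hence $\rho$ and $\beta$, may depend on $\varepsilon$, which is harmless here.
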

\begin{proof}
By Remark \ref{r:sobolev-spaces} and Sobolev embedding,
\begin{eqnarray*}
\int_{\Omega}|u|^{p+1}dx\le \|u\|_{H^s_{\varepsilon,\Omega}}^{p+1}.
\end{eqnarray*}
Since $p>1$, the conclusion of this lemma holds.
\end{proof}

\begin{lemma}\label{l:energy-estimate}
For sufficiently small $\varepsilon>0$, there exist a nonconstant function $\phi\in H^s_{\varepsilon,\Omega}$ and positive constants $t_0$ such that $I_{\varepsilon}(t_0\phi)=0$ and $I_{\varepsilon}(t\phi)\le C\varepsilon^n$ if $t\in [0,t_0]$. Here $C$ is a constant depending on $n,s, \Omega$.
\end{lemma}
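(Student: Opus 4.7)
The plan is to adapt the classical Lin--Ni--Takagi test function to the fractional setting and exploit the scaling of the Gagliardo seminorm. I would fix any point $x_0$ in the interior of $\Omega$ and any nonnegative, nontrivial $\eta\in C^\infty_c(B_1(0))$, and then, for $\varepsilon$ so small that $B_\varepsilon(x_0)\subset \Omega$, define
\[
\phi(x):=\eta\!\left(\frac{x-x_0}{\varepsilon}\right),
\]
extended by zero outside $B_\varepsilon(x_0)$. Since $\phi\in C^\infty_c(\Omega)\subset H^s(\mathbf R^n)$, Remark \ref{r:sobolev-spaces} yields $\phi\in H^s_{\varepsilon,\Omega}$, and $\phi$ is clearly nonconstant.

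Next, I would compute $I_\varepsilon(t\phi)$ by a straightforward rescaling. Because $\phi$ vanishes on $\Omega^c$, the integrand $|\phi(x)-\phi(y)|^2$ is identically zero on $(\Omega^c)^2$, so the Gagliardo integral on $\mathbf R^{2n}\setminus(\Omega^c)^2$ coincides with the one on all of $\mathbf R^{2n}$. The substitution $x=x_0+\varepsilon\xi$, $y=x_0+\varepsilon\zeta$ produces a factor $\varepsilon^{2n-(n+2s)}=\varepsilon^{n-2s}$, which combined with the prefactor $\varepsilon^{2s}$ gives
\[
\frac{C_{n,s}\varepsilon^{2s}}{4}\int_{\mathbf R^{2n}\setminus(\Omega^c)^2}\frac{|\phi(x)-\phi(y)|^2}{|x-y|^{n+2s}}\,dx\,dy=\frac{C_{n,s}\,\varepsilon^n}{4}\,[\eta]_{H^s(\mathbf R^n)}^2.
\]
The lower-order terms scale analogously, $\int_\Omega \phi^2 = \varepsilon^n\|\eta\|_{L^2}^2$ and $\int_\Omega|\phi|^{p+1}=\varepsilon^n\|\eta\|_{L^{p+1}}^{p+1}$, so all three pieces of $I_\varepsilon$ pick up the same factor $\varepsilon^n$. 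Consequently,
\[
I_\varepsilon(t\phi)=\varepsilon^n\bigl(A\,t^2 - B\,t^{p+1}\bigr),\qquad t\ge 0,
\]
with $A:=\tfrac{C_{n,s}}{4}[\eta]_{H^s}^2+\tfrac12\|\eta\|_{L^2}^2>0$ and $B:=\tfrac{1}{p+1}\|\eta\|_{L^{p+1}}^{p+1}>0$ depending only on the fixed choice of $\eta$.

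It then remains to analyze $f(t):=At^2-Bt^{p+1}$. Since $p>1$, $f(0)=0$, $f$ is positive on a neighbourhood of $0$, attains a unique interior maximum $M:=f(t^*)$ at $t^*=(2A/((p+1)B))^{1/(p-1)}$, and afterwards decreases and vanishes at the unique $t_0=(A/B)^{1/(p-1)}>t^*$. Taking this $t_0$ gives $I_\varepsilon(t_0\phi)=0$ and
\[
\sup_{t\in[0,t_0]}I_\varepsilon(t\phi)=\varepsilon^n M\le C\,\varepsilon^n,
\]
where $C=M$ depends only on $n,s,p,\Omega$ through the fixed bump $\eta$. The only genuine point of care in the whole argument is the identification of the truncated Gagliardo integral with the full one on $\mathbf R^{2n}$ using the compact support of $\phi$ inside $\Omega$; once that is in place the $\varepsilon^n$ scaling falls out of pure dimensional analysis, and the remainder is a one-variable exercise completely parallel to the classical local case.
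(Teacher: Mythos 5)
Your proof is correct, and it is in fact cleaner than the paper's. The paper uses the cone-shaped test function $\phi(x)=\varepsilon^{-n}(1-\varepsilon^{-1}|x|)_+$ (with a large amplitude $\varepsilon^{-n}$), bounds each piece of the Gagliardo integral separately by decomposing $\mathbf R^{2n}\setminus(\Omega^c)^2$ into $\Omega\times\Omega$ and $\Omega\times\Omega^c$ (Lemma \ref{l:phi-estimate}), and then must run a separate argument (Lemma \ref{l:t-large}) to locate $\varepsilon$-dependent thresholds $t_1<t_2$ for the one-variable function $g(t)=I_\varepsilon(t\phi)$, because the coefficients of $t^2$ and $t^{p+1}$ scale differently in $\varepsilon$. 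You instead take an $\varepsilon$-independent bump $\eta$ scaled without the extra amplitude, and your key observation is that compact support of $\phi$ in $\Omega$ makes the integrand vanish identically on $(\Omega^c)^2$, so the truncated Gagliardo integral equals the full one and a single change of variables gives the exact identity $I_\varepsilon(t\phi)=\varepsilon^n(At^2-Bt^{p+1})$ with $\varepsilon$-independent $A,B>0$. This buys you: an equality rather than one-sided estimates, a single short scaling computation instead of a multi-term decomposition, an $\varepsilon$-independent $t_0$, and a constant $C=M$ that is independent of $\Omega$ (the paper's constant does depend on $\Omega$). The same compact-support observation would in fact have simplified the paper's Lemma \ref{l:phi-estimate} as well, since its $\phi$ is also supported in $B_\varepsilon\subset\Omega$. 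The only tiny blemish in your write-up is that no extension by zero is needed since $\eta\in C_c^\infty(B_1)$ already vanishes outside $B_1$; everything else is sound.
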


To prove this lemma, we construct a special function. Without loss of generality, we assume that $0\in \Omega$. Let $\varepsilon\in (0,1)$ is small enough so that $B_{2\varepsilon}\subset \Omega$. Define
\begin{equation*}
\phi(x)=\left\{\begin{array}{cc}
                 \varepsilon^{-n}(1-\varepsilon^{-1} |x|) & \mbox{if } |x|< \varepsilon, \\
                 0 & \mbox{if } |x|\ge \varepsilon .
               \end{array}
\right.
\end{equation*}

\begin{lemma}\label{l:phi-estimate}
For sufficiently small $\varepsilon$, $\phi\in H^s_{\varepsilon,\Omega}$. Precisely, we have that
\begin{equation}\label{e:phi-estimate}
\|\phi\|^2_{H^s_{\varepsilon,\Omega}}\le \frac{C}{\varepsilon^{n}},
\end{equation}
where $C$ is a positive constant depending on $n$, $s$, $p$ and $\Omega$.
\end{lemma}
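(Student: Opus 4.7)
The plan is to compute the two pieces of $\|\phi\|_{H^s_{\varepsilon,\Omega}}^2$ separately using the self-similar form of $\phi$ under the dilation $x \mapsto \varepsilon x$. Introduce the fixed profile $\tilde\phi(u) = (1-|u|)_+$, which is supported in $\overline{B_1}$ and Lipschitz on $\mathbf R^n$, so that $\phi(x) = \varepsilon^{-n}\tilde\phi(x/\varepsilon)$. Since $B_{2\varepsilon}\subset\Omega$, the support of $\phi$ lies strictly inside $\Omega$, which simplifies the bookkeeping on the non-local term.

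First I would handle the $L^2$ term. A change of variables $x=\varepsilon u$ gives
\begin{equation*}
\int_{\Omega}\phi^2\,dx = \int_{B_\varepsilon}\varepsilon^{-2n}(1-\varepsilon^{-1}|x|)^2dx = \varepsilon^{-n}\int_{B_1}(1-|u|)^2du = C\varepsilon^{-n}.
\end{equation*}

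Next, for the Gagliardo part, I would use the trivial inclusion $\mathbf R^{2n}\setminus(\Omega^c)^2\subset\mathbf R^{2n}$ to bound
\begin{equation*}
\int_{\mathbf R^{2n}\setminus(\Omega^c)^2}\frac{|\phi(x)-\phi(y)|^2}{|x-y|^{n+2s}}dxdy \le \int_{\mathbf R^n}\int_{\mathbf R^n}\frac{|\phi(x)-\phi(y)|^2}{|x-y|^{n+2s}}dxdy =: [\phi]^2_{H^s(\mathbf R^n)}.
\end{equation*}
Then I would scale by setting $x=\varepsilon u$, $y=\varepsilon v$. Using $\phi(x)-\phi(y) = \varepsilon^{-n}(\tilde\phi(u)-\tilde\phi(v))$, the Jacobian $\varepsilon^{2n}$, and $|x-y|^{n+2s} = \varepsilon^{n+2s}|u-v|^{n+2s}$, one obtains
\begin{equation*}
[\phi]^2_{H^s(\mathbf R^n)} = \varepsilon^{-n-2s}[\tilde\phi]^2_{H^s(\mathbf R^n)}.
\end{equation*}

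The remaining task is to show $[\tilde\phi]_{H^s(\mathbf R^n)}<\infty$, a constant independent of $\varepsilon$. Since $\tilde\phi$ has Lipschitz constant $1$ and is supported in $\overline{B_1}$, I would split the integral into the near-diagonal region $\{|u-v|<1\}$ and its complement. On the near-diagonal region the Lipschitz bound gives integrand $\lesssim |u-v|^{2-n-2s}$, which is integrable for $s<1$; on the far region $\tilde\phi$ is bounded and compactly supported, yielding an elementary integral. Combining this with the $L^2$ estimate and multiplying the Gagliardo term by $\varepsilon^{2s}$, the powers match ($\varepsilon^{2s}\cdot\varepsilon^{-n-2s}=\varepsilon^{-n}$) and produce the asserted bound $\|\phi\|_{H^s_{\varepsilon,\Omega}}^2\le C\varepsilon^{-n}$.

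There is no real obstacle: the argument is essentially a scaling computation. The only subtle point is verifying finiteness of $[\tilde\phi]_{H^s(\mathbf R^n)}$, which is standard for Lipschitz functions of compact support when $s\in(0,1)$; the restriction to $\mathbf R^{2n}\setminus(\Omega^c)^2$ is absorbed for free by monotonicity. The constant $C$ depends on $n,s,p$ through $[\tilde\phi]^2_{H^s(\mathbf R^n)}$ and $C_{n,s}$, and on $\Omega$ only through the requirement that $\varepsilon$ be small enough that $B_{2\varepsilon}\subset\Omega$.
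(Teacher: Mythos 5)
Your proof is correct, and it takes a genuinely cleaner route than the paper's. The paper bounds the seminorm by brute force: it splits $\mathbf R^{2n}\setminus(\Omega^c)^2$ into the pieces $\Omega\times\Omega$ and $2(\Omega\times\Omega^c)$, then further decomposes $\Omega\times\Omega$ into $B_\varepsilon\times B_\varepsilon$, $(\Omega\setminus B_\varepsilon)\times B_\varepsilon$, and then again $\Omega\setminus B_{2\varepsilon}$ versus $B_{2\varepsilon}\setminus B_\varepsilon$, estimating each by hand with explicit radial integrals. You instead first observe that the integrand is nonnegative so the region-restricted double integral is dominated by the full Gagliardo seminorm $[\phi]^2_{H^s(\mathbf R^n)}$, and then exploit the exact self-similarity $\phi(x)=\varepsilon^{-n}\tilde\phi(x/\varepsilon)$ to reduce everything to the $\varepsilon$-independent constant $[\tilde\phi]^2_{H^s(\mathbf R^n)}$, whose finiteness is the standard fact that compactly supported Lipschitz functions lie in $H^s$ for $s\in(0,1)$. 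The scaling argument is shorter, avoids the case analysis, and makes the powers of $\varepsilon$ transparent; the paper's approach is more elementary in that it never appeals to the global $H^s(\mathbf R^n)$ seminorm and does not need the observation that one can drop the restriction on the integration region. One small remark on your final sentence: the constant from your argument depends on $n,s,\Omega$ but not actually on $p$ (the profile $\tilde\phi$ and its seminorm involve only $n,s$); the lemma's statement lists $p$ but the paper's own proof likewise yields no $p$-dependence, so this is harmless.
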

\begin{proof}
A direct calculus yields
\begin{eqnarray}\label{e:phi-2}
\int_{\Omega}\phi^2(x)dx&=&\int_{B_{\varepsilon}}\frac{1}{\varepsilon^{2n}}\left(1-\frac{|x|}{\varepsilon}\right)^2dx\notag\\
&=&\frac{\omega_{n-1}}{\varepsilon^{2n}}\int_0^\varepsilon\left(1-\frac{r}{\varepsilon}\right)^2r^{n-1}dr=\left(\frac{2\omega_{n-1}}{n(n+1)(n+2)}\right)\frac{1}{\varepsilon^{n}}.
\end{eqnarray}
Here $\omega_{n-1}$ is the area of unit sphere in $\mathbf R^n$. Thus it remains us to estimate
\begin{eqnarray}\label{e:phi-integral}
\frac{C_{n,s}\varepsilon^{2s}}{2}\int_{\mathbf R^{2n}\setminus(\Omega^c)^2}\frac{|\phi(x)-\phi(y)|^2}{|x-y|^{n+2s}}dxdy.
\end{eqnarray}
Compute
\begin{eqnarray}\label{e:integrals-1}
&&\int_{\mathbf R^{2n}\setminus(\Omega^c)^2}\frac{|\phi(x)-\phi(y)|^2}{|x-y|^{n+2s}}dxdy\notag\\
&=&\int_{\Omega}\int_{\Omega}\frac{|\phi(x)-\phi(y)|^2}{|x-y|^{n+2s}}dxdy+\int_{\Omega^c}\int_{\Omega}\frac{|\phi(x)-\phi(y)|^2}{|x-y|^{n+2s}}dxdy\notag\\
&&+\int_{\Omega}\int_{\Omega^c}\frac{|\phi(x)-\phi(y)|^2}{|x-y|^{n+2s}}dxdy\notag\\
&=&\int_{\Omega}\int_{\Omega}\frac{|\phi(x)-\phi(y)|^2}{|x-y|^{n+2s}}dxdy+2\int_{\Omega^c}\int_{\Omega}\frac{|\phi(x)-\phi(y)|^2}{|x-y|^{n+2s}}dxdy.
\end{eqnarray}
Calculate
\begin{eqnarray}\label{e:integrals-2}
&&\int_{\Omega}\int_{\Omega}\frac{|\phi(x)-\phi(y)|^2}{|x-y|^{n+2s}}dxdy\notag\\
&=&\int_{B_{\varepsilon}}\int_{B_{\varepsilon}}\frac{\left|\frac{1}{\varepsilon^{n}}\left(1-\frac{|x|}{\varepsilon}\right)
-\frac{1}{\varepsilon^{n}}\left(1-\frac{|y|}{\varepsilon}\right)\right|^2}{|x-y|^{n+2s}}dxdy
+\int_{\Omega\setminus B_{\varepsilon}}\int_{B_{\varepsilon}}\frac{\left|\frac{1}{\varepsilon^{n}}\left(1-\frac{|x|}{\varepsilon}\right)\right|^2}{|x-y|^{n+2s}}dxdy\notag\\
&&+\int_{B_{\varepsilon}}\int_{\Omega\setminus B_{\varepsilon}}\frac{\left|\frac{1}{\varepsilon^{n}}\left(1-\frac{|y|}{\varepsilon}\right)\right|^2}{|x-y|^{n+2s}}dxdy\notag\\
&=&\int_{B_{\varepsilon}}\int_{B_{\varepsilon}}\frac{\left|\frac{1}{\varepsilon^{n}}\left(1-\frac{|x|}{\varepsilon}\right)
-\frac{1}{\varepsilon^{n}}\left(1-\frac{|y|}{\varepsilon}\right)\right|^2}{|x-y|^{n+2s}}dxdy
+2\int_{\Omega\setminus B_{\varepsilon}}\int_{B_{\varepsilon}}\frac{\left|\frac{1}{\varepsilon^{n}}\left(1-\frac{|x|}{\varepsilon}\right)\right|^2}{|x-y|^{n+2s}}dxdy.\notag\\
&:=&T_1+T_2.
\end{eqnarray}
Estimating $T_1$, we have that
\begin{eqnarray*}
T_1&=&\frac{1}{\varepsilon^{2n+2}}\int_{B_{\varepsilon}}\int_{B_{\varepsilon}}\frac{\left||x|-|y|\right|^2}{|x-y|^{n+2s}}dxdy\notag\\
&\le&\frac{1}{\varepsilon^{2n+2}}\int_{B_{\varepsilon}}\int_{B_{\varepsilon}}\frac{|x-y|^2}{|x-y|^{n+2s}}dxdy
=\frac{1}{\varepsilon^{2n+2}}\int_{B_{\varepsilon}}\int_{B_{\varepsilon}}\frac{1}{|x-y|^{n+2s-2}}dxdy\notag\\
&\le&\frac{1}{\varepsilon^{2n+2}}\int_{B_{\varepsilon}}\int_{B_{2\varepsilon}(y)}\frac{1}{|x-y|^{n+2s-2}}dxdy
=\frac{\omega_{n-1}}{\varepsilon^{2n+2}}\int_{B_{\varepsilon}}\left\{\int_{0}^{2\varepsilon}\frac{1}{r^{n+2s-2}}r^{n-1}dr\right\}dy\notag\\
&\le&\frac{C}{\varepsilon^{n+2s}}.
\end{eqnarray*}
And calculate $T_2$:
\begin{eqnarray*}
T_2&=&\frac{2}{\varepsilon^{2n+2}}\int_{\Omega\setminus B_{\varepsilon}}\int_{B_{\varepsilon}}\frac{\left(\varepsilon-|x|\right)^2}{|x-y|^{n+2s}}dxdy\\
&=&\frac{2}{\varepsilon^{2n+2}}\int_{\Omega\setminus B_{2\varepsilon}}\int_{B_{\varepsilon}}\frac{\left(\varepsilon-|x|\right)^2}{|x-y|^{n+2s}}dxdy+\frac{2}{\varepsilon^{2n+2}}\int_{B_{2\varepsilon}\setminus B_{\varepsilon}}\int_{B_{\varepsilon}}\frac{\left(\varepsilon-|x|\right)^2}{|x-y|^{n+2s}}dxdy.\notag
\end{eqnarray*}
Then estimate
\begin{eqnarray*}
&&\frac{2}{\varepsilon^{2n+2}}\int_{\Omega\setminus B_{2\varepsilon}}\int_{B_{\varepsilon}}\frac{\left(\varepsilon-|x|\right)^2}{|x-y|^{n+2s}}dxdy\notag\\
&=&\frac{2}{\varepsilon^{2n+2}}\int_{B_{\varepsilon}}\int_{\Omega\setminus B_{2\varepsilon}}\frac{\left(\varepsilon-|x|\right)^2}{|x-y|^{n+2s}}dydx
\le\frac{2}{\varepsilon^{2n+2}}\int_{B_{\varepsilon}}\int_{\Omega\setminus B_{\varepsilon}}\frac{\varepsilon^2}{|y|^{n+2s}}dydx\notag\\
&\le&\frac{2\omega_{n-1}}{\varepsilon^{2n}}\int_{B_{\varepsilon}}\left\{\int_{\varepsilon}^{+\infty} \frac{1}{r^{n+2s}}r^{n-1}dr\right\}dx=\frac{\omega_{n-1}}{s\varepsilon^{2n+2s}}\int_{B_{\varepsilon}}dx\notag\\
&\le&\frac{C}{\varepsilon^{n+2s}},
\end{eqnarray*}
and
\begin{eqnarray*}
&&\frac{2}{\varepsilon^{2n+2}}\int_{B_{2\varepsilon}\setminus B_{\varepsilon}}\int_{B_{\varepsilon}}\frac{\left(\varepsilon-|x|\right)^2}{|x-y|^{n+2s}}dxdy\notag\\
&\le&\frac{2}{\varepsilon^{2n+2}}\int_{B_{2\varepsilon}\setminus B_{\varepsilon}}\int_{B_{\varepsilon}}\frac{\left(|y|-|x|\right)^2}{|x-y|^{n+2s}}dxdy
\le\frac{2}{\varepsilon^{2n+2}}\int_{B_{2\varepsilon}\setminus B_{\varepsilon}}\int_{B_{\varepsilon}}\frac{1}{|x-y|^{n+2s-2}}dxdy\notag\\
&\le&\frac{2}{\varepsilon^{2n+2}}\int_{B_{2\varepsilon}\setminus B_{\varepsilon}}\int_{B_{3\varepsilon}(y)}\frac{1}{|x-y|^{n+2s-2}}dxdy
=\frac{2\omega_{n-1}}{\varepsilon^{2n+2}}\int_{B_{2\varepsilon}\setminus B_{\varepsilon}}\left\{\int_{0}^{3\varepsilon}\frac{1}{r^{n+2s-2}}r^{n-1}dr\right\}dy\notag\\
&=&\frac{2\cdot 3^{2-2s}\omega_{n-1}}{\varepsilon^{2n+2s}}\int_{B_{2\varepsilon}\setminus B_{\varepsilon}}dx
=\frac{2\cdot 3^{2-2s}(2^n-1)\omega_{n-1}^2}{n\varepsilon^{n+2s}}.
\end{eqnarray*}
Therefore, we obtain that
\begin{eqnarray}\label{e:integrals-1-2}
\int_{\Omega}\int_{\Omega}\frac{|\phi(x)-\phi(y)|^2}{|x-y|^{n+2s}}dxdy=T_1+T_2\le \frac{C}{\varepsilon^{n+2s}},
\end{eqnarray}
where $C$ is a positive constant depending on $n,\, s,\,\Omega$.

Finally, from $B_{2\varepsilon}\subset \Omega$, we have
\begin{eqnarray}\label{e:integral-3}
&&2\int_{\Omega^c}\int_{\Omega}\frac{|\phi(x)-\phi(y)|^2}{|x-y|^{n+2s}}dxdy\notag\\
&=&2\int_{\Omega^c}\int_{B_{\varepsilon}}\frac{\frac{1}{\varepsilon^{2n}}\left(1-\frac{|x|}{\varepsilon}\right)^2}{|x-y|^{n+2s}}dxdy
\le \frac{C}{\varepsilon^{2n}}\int_{\Omega^c}\int_{B_{\varepsilon}}\frac{\left(1-\frac{|x|}{\varepsilon}\right)^2}{\left|\frac{y}{2}\right|^{n+2s}}dxdy\notag\\
&\le&\frac{C}{\varepsilon^{2n}}\int_{B_{2\varepsilon}^c}\int_{B_{\varepsilon}}\frac{\left(1-\frac{|x|}{\varepsilon}\right)^2}{\left|\frac{y}{2}\right|^{n+2s}}dxdy
\le \frac{C}{\varepsilon^{n+2s}}.
\end{eqnarray}
Summarizing the estimates (\ref{e:phi-integral}), (\ref{e:integrals-1}), (\ref{e:integrals-2}), (\ref{e:integrals-1-2}) and (\ref{e:integral-3}), we have that
\begin{equation}\label{e:phi-3}
\frac{C_{n,s}\varepsilon^{2s}}{2}\int_{\mathbf R^{2n}\setminus(\Omega^c)^2}\frac{|\phi(x)-\phi(y)|^2}{|x-y|^{n+2s}}dxdy\le \frac{C}{\varepsilon^{n}},
\end{equation}
where $C$ is a positive constant depending on $n,s,\Omega$.
From (\ref{e:phi-2}) and (\ref{e:phi-3}), we obtain (\ref{e:phi-estimate}). This completes the proof.
\end{proof}
Define
\begin{eqnarray*}
g(t)=I_{\varepsilon}(t\phi),\quad t\ge 0.
\end{eqnarray*}
\begin{lemma}\label{l:t-large}
Assume that $\varepsilon>0$ sufficiently small, there exist $t_1$ and $t_2$ with $0<t_1<t_2$ such that
\begin{enumerate}
  \item[(1)] for $t>t_1$, $g'(t)<0$;
  \item[(2)] for $t>t_2$, $g(t)<0$.
\end{enumerate}

\end{lemma}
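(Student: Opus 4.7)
The plan is to reduce the lemma to one-variable calculus. Since $\phi\ge 0$ is compactly supported in $B_{\varepsilon}\subset\Omega$ (valid once $B_{2\varepsilon}\subset\Omega$), and since $I_{\varepsilon}(u)=\tfrac12\|u\|_{H^s_{\varepsilon,\Omega}}^2-\tfrac{1}{p+1}\int_{\Omega}|u|^{p+1}dx$, I introduce the abbreviations
\begin{equation*}
A_{\varepsilon}:=\|\phi\|_{H^s_{\varepsilon,\Omega}}^2, \qquad B_{\varepsilon}:=\int_{\Omega}\phi^{p+1}dx,
\end{equation*}
so that, for $t\ge 0$,
\begin{equation*}
g(t)=I_{\varepsilon}(t\phi)=\frac{A_{\varepsilon}}{2}\,t^2-\frac{B_{\varepsilon}}{p+1}\,t^{p+1}.
\end{equation*}
By Lemma \ref{l:phi-estimate} the constant $A_{\varepsilon}$ is finite (and strictly positive since $\phi$ is nonconstant), and a direct polar-coordinate computation analogous to (\ref{e:phi-2}) yields $B_{\varepsilon}=c_{n,p}\,\varepsilon^{-np}$ for an explicit positive constant $c_{n,p}$; in particular both $A_{\varepsilon}$ and $B_{\varepsilon}$ are finite and strictly positive.

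Now I simply differentiate and compare. From the display above,
\begin{equation*}
g'(t)=A_{\varepsilon}t-B_{\varepsilon}t^p=t\bigl(A_{\varepsilon}-B_{\varepsilon}t^{p-1}\bigr),
\end{equation*}
so choosing $t_1:=(A_{\varepsilon}/B_{\varepsilon})^{1/(p-1)}>0$ gives $g'(t)<0$ for every $t>t_1$, which is assertion (1). For assertion (2) I rewrite
\begin{equation*}
g(t)=t^2\left(\frac{A_{\varepsilon}}{2}-\frac{B_{\varepsilon}}{p+1}\,t^{p-1}\right),
\end{equation*}
which is negative exactly when $t^{p-1}>\tfrac{(p+1)A_{\varepsilon}}{2B_{\varepsilon}}$. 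I therefore set
\begin{equation*}
t_2:=\left(\frac{p+1}{2}\right)^{1/(p-1)}t_1.
\end{equation*}
Since $p>1$ forces $(p+1)/2>1$, one has $t_2>t_1>0$, and $g(t)<0$ for every $t>t_2$.

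The argument is purely algebraic once $A_{\varepsilon}$ and $B_{\varepsilon}$ are known to be strictly positive and finite, so there is no real analytic obstacle; the smallness assumption on $\varepsilon$ enters only through the requirement $B_{2\varepsilon}\subset\Omega$ used to define $\phi$ and to apply Lemma \ref{l:phi-estimate}. I note that $t_1$ and $t_2$ both scale like $\varepsilon^n$ in $\varepsilon$ (because $A_{\varepsilon}\lesssim \varepsilon^{-n}$ and $B_{\varepsilon}\simeq \varepsilon^{-np}$), which is consistent with the $O(\varepsilon^n)$ energy bound needed in Lemma \ref{l:energy-estimate}.
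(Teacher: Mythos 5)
Your proof is correct and follows essentially the same route as the paper: write $g(t)=\tfrac{A_\varepsilon}{2}t^2-\tfrac{B_\varepsilon}{p+1}t^{p+1}$ with $A_\varepsilon=\|\phi\|^2_{H^s_{\varepsilon,\Omega}}>0$, $B_\varepsilon=\int_\Omega\phi^{p+1}>0$, differentiate, and read off thresholds $t_1,t_2$ that differ by the factor $\left(\frac{p+1}{2}\right)^{1/(p-1)}>1$. The only cosmetic difference is that the paper replaces $A_\varepsilon$ by the upper bound $2C_0\varepsilon^{-n}$ from Lemma \ref{l:phi-estimate} before solving the inequalities (which is harmless, since enlarging $t_1,t_2$ only strengthens the conclusion and directly yields the explicit $\varepsilon^n$ scaling used afterward in Lemma \ref{l:energy-estimate}), whereas you keep the exact coefficients and observe the $\varepsilon^n$ scaling at the end.
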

\begin{proof}
By a similar argument as in the proof of Lemma \ref{l:phi-estimate}, we obtain that, for $t>0$,
\begin{eqnarray*}
\frac{C_{n,s}\varepsilon^{2s}}{4}\int_{\mathbf R^{2n}\setminus(\Omega^c)^2}\frac{|t\phi(x)-t\phi(y)|^2}{|x-y|^{n+2s}}dxdy+\frac{1}{2}\int_{\Omega}(t\phi)^2dx
\le C_0t^2\varepsilon^{-n},
\end{eqnarray*}
where $C_0$ is a positive constant depending on $n,s, \Omega$.
Moreover,
\begin{eqnarray*}
\int_{\Omega}(t\phi)_+^{p+1}dx
&=&\frac{t^{p+1}}{\varepsilon^{n(p+1)}}\int_{B_{\varepsilon}}\left(1-\frac{|x|}{\varepsilon}\right)^{p+1}dx\notag\\
&=&\frac{\omega_{n-1} t^{p+1}}{\varepsilon^{n(p+1)}}\int_0^{\varepsilon}\left(1-\frac{r}{\varepsilon}\right)^{p+1}r^{n-1}dr\notag\\
&=&\frac{\omega_{n-1} t^{p+1}}{\varepsilon^{np}}\int_0^{1}\left(1-\rho\right)^{p+1}\rho^{n-1}d\rho=\frac{\alpha\omega_{n-1}t^{p+1}}{\varepsilon^{np}},
\end{eqnarray*}
where $\alpha:=\int_0^{1}\left(1-\rho\right)^{p+1}\rho^{n-1}d\rho$. Let $t_2=\left(\frac{C_0(p+1)}{\alpha\omega_{n-1}}\right)^{\frac{1}{p-1}}\varepsilon^{n}$. Then for all $t>t_2$, it holds that $g(t)=I_{\varepsilon}(t\phi)<0$.

Next compute
\begin{eqnarray*}
g'(t)&=&\frac{C_{n,s}\varepsilon^{2s}t}{2}\int_{\mathbf R^{2n}\setminus(\Omega^c)^2}\frac{|\phi(x)-\phi(y)|^2}{|x-y|^{n+2s}}dxdy+t\int_{\Omega}\phi^2dx
-t^p\int_{\Omega}\phi^{p+1}dx\notag\\
&\le& 2C_0t\varepsilon^{-n}-\frac{\alpha\omega_{n-1}t^{p}}{\varepsilon^{np}}.
\end{eqnarray*}
Let $t_1=\left(\frac{2C_0}{\alpha\omega_{n-1}}\right)^{\frac{1}{p-1}}\varepsilon^{n}$. Thus if choosing $t>t_1$, we have that $g'(t)<0$.
Since $p>1$, it holds that $t_1<t_2$. This completes the proof.
\end{proof}

\begin{proof}[Proof of Lemma \ref{l:energy-estimate}]
By Lemma \ref{l:positive}, it holds that $g(t)>0$ if $t$ is positive and sufficiently small. Then from Lemma \ref{l:t-large}, we have that there exists a $t_0>0$ such that $g(t_0)=0$. Moreover, estimate
\begin{eqnarray*}
\max_{t\ge 0} g(t)&=&\max_{0\le t\le t_1}g(t)\\
&\le & \max_{0\le t\le t_1}\left\{ C_0t^2\varepsilon^{-n}-\frac{1}{p+1}\int_{\Omega}(t\phi)^{p+1}dx\right\}\\
&\le &\max_{0\le t\le t_1}C_0t^2\varepsilon^{-n}\\
&=& C_0t_1^2\varepsilon^{-n}=C_1\varepsilon^{n},
\end{eqnarray*}
where $C_1=C_0\left(\frac{2C_0}{\alpha\omega_{n-1}}\right)^{\frac{2}{p-1}}$. It completes the proof.
\end{proof}

\begin{proof}[Proof of Theorem \ref{t:np}]
(1) Let $e=t_0\phi$ and $\Gamma=\{\gamma\in C([0,1],H^s_{\varepsilon,\Omega})\,|\,\gamma(0)=0 \mbox{ and }\gamma(1)=e\}$. Then by Mountain Pass Theorem, we have that $$c_{\varepsilon}:=\inf_{\gamma\in \Gamma}\sup_{s\in [0,1]} I_{\varepsilon}(\gamma(s))>0$$
is a critical value of $I_{\varepsilon}$. Then there exists a critical point $u_{\varepsilon}$ such that
\begin{eqnarray*}
I_{\varepsilon}(u_{\varepsilon})=c_{\varepsilon}\le C_1\varepsilon^{n}.
\end{eqnarray*}

Note that the unique constant solution to (\ref{e:np}) is $u\equiv 1$ on $\mathbf R^n$. A direct calculate yields
\begin{eqnarray*}
I_{\varepsilon}(1)=\left(\frac{1}{2}-\frac{1}{p+1}\right)|\Omega|>0,
\end{eqnarray*}
where $|\Omega|$ denotes the volume of $\Omega$. Thus
for $\varepsilon $ small enough, we have that
\begin{equation*}
I_{\varepsilon}(u_{\varepsilon})<I_{\varepsilon}(1).
\end{equation*}
Therefore, $u_{\varepsilon}$ is a nonconstant solution to (\ref{e:np}).

(2) Since $u_{\varepsilon}$ is a solution to (\ref{e:np}), we have that
\begin{equation}\label{e:critical-e}
\frac{C_{n,s}\varepsilon^{2s}}{2}\int_{\mathbf R^{2n}\setminus(\Omega^c)^2}\frac{|u_{\varepsilon}(x)-u_{\varepsilon}(y)|^2}{|x-y|^{n+2s}}dxdy
+\int_{\Omega}u_{\varepsilon}^2dx=\int_{\Omega}|u_{\varepsilon}|^{p+1}dx.
\end{equation}
Then by the definition of $I_{\varepsilon}$, it holds that
\begin{eqnarray*}
&&I_{\varepsilon}(u_{\varepsilon})\\
&=&\frac{1}{2}\left(\frac{C_{n,s}\varepsilon^{2s}}{2}\int_{\mathbf R^{2n}\setminus(\Omega^c)^2}\frac{|u_{\varepsilon}(x)-u_{\varepsilon}(y)|^2}{|x-y|^{n+2s}}dxdy+\int_{\Omega}u_{\varepsilon}^2dx\right)
-\frac{1}{p+1}\int_{\Omega}|u_{\varepsilon}|^{p+1}dx\\
&=& \left(\frac{1}{2}-\frac{1}{p+1}\right)\left(\frac{C_{n,s}\varepsilon^{2s}}{2}\int_{\mathbf R^{2n}\setminus(\Omega^c)^2}\frac{|u_{\varepsilon}(x)-u_{\varepsilon}(y)|^2}{|x-y|^{n+2s}}dxdy+\int_{\Omega}u_{\varepsilon}^2dx\right)\\
&=&\left(\frac{1}{2}-\frac{1}{p+1}\right)\|u_{\varepsilon}\|_{H^s_{\varepsilon,\Omega}}^2.
\end{eqnarray*}
Thus, choosing $C_{2}=2C_1(\frac{p+1}{p-1})$, we obtain (\ref{e:solution-norm}).

(3) We prove that we there exists a critical point $u_{\varepsilon}\ge 0$ in $\mathbf R^n$. In fact, when $u_{\varepsilon}\le 0$, we can choose $-u_{\varepsilon}$. Thus we only need to exclude the sign change case. We argue by contradiction. Assume that $u_{\varepsilon}$ is a sign change critical point obtained above. Note that
\begin{equation*}
(|u_{\varepsilon}(x)|-|u_{\varepsilon}(y)|)^2\le |u_{\varepsilon}(x)-u_{\varepsilon}(y)|^2.
\end{equation*}
Then we have the following Kato-type inequality
\begin{eqnarray}\label{e:e:kti}
\int_{\mathbf R^{2n}\setminus(\Omega^c)^2}\frac{(|u_{\varepsilon}(x)|-|u_{\varepsilon}(y)|)^2}{|x-y|^{n+2s}}dxdy<\int_{\mathbf R^{2n}\setminus(\Omega^c)^2}\frac{|u_{\varepsilon}(x)-u_{\varepsilon}(y)|^2}{|x-y|^{n+2s}}dxdy.
\end{eqnarray}
The strict inequality is from that $u_{\varepsilon}$ is sign change.
It follows that
\begin{equation}\label{e:fkti}
I_{\varepsilon}(|u_{\varepsilon}|)< I_{\varepsilon}(u_{\varepsilon}).
\end{equation}

Let
\begin{eqnarray*}
&&f(t)=I_{\varepsilon}(t|u_{\varepsilon}|)\\
&=&\frac{t^2}{2}\left(\frac{C_{n,s}\varepsilon^{2s}}{2}\int_{\mathbf R^{2n}\setminus(\Omega^c)^2}\frac{(|u_{\varepsilon}(x)|-|u_{\varepsilon}(y)|)^2}{|x-y|^{n+2s}}dxdy
+\int_{\Omega}u_{\varepsilon}^2dx\right)-\frac{t^{p+1}}{p+1}\int_{\Omega}|u_{\varepsilon}|^{p+1}dx,\notag
\end{eqnarray*}
where $t\in [0,+\infty)$. For simplicity, set
$$\Lambda:=\left(\frac{C_{n,s}\varepsilon^{2s}}{2}\int_{\mathbf R^{2n}\setminus(\Omega^c)^2}\frac{|u_{\varepsilon}(x)-u_{\varepsilon}(y)|^2}{|x-y|^{n+2s}}dxdy
+\int_{\Omega}u_{\varepsilon}^2dx\right)>0,$$
$$\bar\Lambda:=\left(\frac{C_{n,s}\varepsilon^{2s}}{2}\int_{\mathbf R^{2n}\setminus(\Omega^c)^2}\frac{(|u_{\varepsilon}(x)|-|u_{\varepsilon}(y)|)^2}{|x-y|^{n+2s}}dxdy
+\int_{\Omega}u_{\varepsilon}^2dx\right)>0,$$
and
$$\Xi:=\int_{\Omega}|u_{\varepsilon}|^{p+1}dx>0.$$
Thus
\begin{eqnarray*}
f(t)=\frac{\bar\Lambda}{2}t^2-\frac{\Xi}{p+1}t^{p+1},
\end{eqnarray*}
and
\begin{eqnarray*}
\bar\Lambda<\Lambda.
\end{eqnarray*}
Since $u_{\varepsilon}$ is critical point, (\ref{e:critical-e}) yields
\begin{eqnarray*}
\Lambda=\Xi\quad\mbox{and}\quad\frac{\Lambda}{2}-\frac{\Xi}{p+1}=c_{\varepsilon}.
\end{eqnarray*}
Note that $f(t)\to -\infty$ as $t\to +\infty$. $f$ has a unique maximum point at $t_1=(\frac{\bar\Lambda}{\Xi})^{\frac{1}{p-1}}$ and
\begin{eqnarray*}
f(t_1)&=&\frac{\bar\Lambda}{2}\left(\frac{\bar\Lambda}{\Xi}\right)^{\frac{2}{p-1}}
-\frac{\Xi}{p+1}\left(\frac{\bar\Lambda}{\Xi}\right)^{\frac{p+1}{p-1}}\notag\\
&=&\left(\frac{\bar\Lambda}{\Xi}\right)^{\frac{2}{p-1}}\left(\frac{\bar\Lambda}{2}
+\left(c-\frac{\Lambda}{2}\right)\left(\frac{\bar\Lambda}{\Xi}\right)\right)\notag\\
&=&\left(\frac{\bar\Lambda}{\Xi}\right)^{\frac{2}{p-1}}c_{\varepsilon}<c_{\varepsilon}.
\end{eqnarray*}
Thus, there exists $t_2>0$ such that $f(t_2)=0$. Let $\bar e=t_2u_{\varepsilon}$. We now set $V^+=\left\{\lambda e+\mu \bar e\,|\,\lambda\ge0,\,\mu\ge 0\right\}$. Therefore, there is an $R_0>\max\{\|e\|_{H^s_{\varepsilon,\Omega}},\,\|\bar e\|_{H^s_{\varepsilon,\Omega}}\}$ such that for all $u\in V^+$ with $\|u\|_{H^s_{\varepsilon,\Omega}}\ge R_0$, it holds that $I_{\varepsilon}(u)<0$. Let $\gamma_0$ be the path which consists of the line segment with endpoints $0$ and $R_0\bar e/\|\bar e\|_{H^s_{\varepsilon,\Omega}}$, the circular arc $\partial B_{R_0}\cap V^+$, and the line segment with endpoints $R_0 e/\|e\|_{H^s_{\varepsilon,\Omega}}$ and $e$. Hence $\gamma_0$ belongs to $\Gamma$. However, along $\gamma_0$, $I_{\varepsilon}$ is positive only on the line joining $0$ and $\bar e$. This yields that
\begin{eqnarray*}
\sup_{u\in \gamma_0}I_{\varepsilon}(u)=f(t_1)<c_{\varepsilon}.
\end{eqnarray*}
It is a contradiction. Therefore, there is a critical point $u_{\varepsilon}\ge 0$.
This completes the proof.
\end{proof}


\section{Proof of Theorem \ref{t:integrability}}\label{s:properties}

\begin{proof}[Proof of Theorem \ref{t:integrability}]
(1) Without loss of generality, we assume that $0\in \Omega$. Then there exists $R_0>0$ such that $B_{2R_0}\subset \Omega$.
Since $u\in H^s_{\varepsilon,\Omega}$, it holds that
\begin{equation*}
I:=\int_{\Omega}\int_{\Omega^c}\frac{|u(x)-u(y)|^2}{|x-y|^{n+2s}}dydx<+\infty.
\end{equation*}
Particularly,
\begin{equation*}
\int_{B_{R_0}}\int_{\Omega_{\rho}\cap\Omega^c}\frac{|u(x)-u(y)|^2}{|x-y|^{n+2s}}dydx\le I<+\infty.
\end{equation*}
Here $\rho>0$ is constant and $\Omega_{\rho}=\{y\in\mathbf R^n\,|\,d(y,\Omega)<\rho\}$.
A direct computation yields
\begin{eqnarray*}
&&\int_{B_{R_0}}\int_{\Omega_{\rho}\cap\Omega^c}\frac{|u(x)-u(y)|^2}{|x-y|^{n+2s}}dydx\notag\\
&\ge&\int_{B_{R_0}}\int_{\Omega_{\rho}\cap\Omega^c}\frac{|u(x)|^2}{|x-y|^{n+2s}}dydx
+\int_{B_{R_0}}\int_{\Omega_{\rho}\cap\Omega^c}\frac{|u(y)|^2}{|x-y|^{n+2s}}dydx\notag\\
&&-2\int_{B_{R_0}}\int_{\Omega_{\rho}\cap\Omega^c}\frac{|u(x)u(y)|}{|x-y|^{n+2s}}dydx:=T_1+T_2-T_3.
\end{eqnarray*}
We now estimate these three terms. Firstly,
\begin{eqnarray*}
T_1&=&\int_{B_{R_0}}|u(x)|^2\left\{\int_{\Omega_{\rho}\cap\Omega^c}\frac{1}{|x-y|^{n+2s}}dy\right\}dx\notag\\
&\ge& \frac{|\Omega_{\rho}\cap\Omega^c|}{(d(\Omega)+\rho)^{n+2s}}\int_{B_{R_0}}|u(x)|^2dx:=a.
\end{eqnarray*}
where $d(\Omega)$ denotes the diameter of $\Omega$ and $|\Omega_{\rho}\cap\Omega^c|$ is the volume of $\Omega_{\rho}\cap\Omega^c$. Note that $\int_{B_{R_0}}|u(x)|^2dx<\infty$
since $\|u\|_{H^s(\Omega)}\le\|u\|_{H^s_{\varepsilon,\Omega}}$. Thus $a$ is a nonnegative constant depending on $\Omega, R_0,\rho, n,s$. Secondly,
\begin{eqnarray*}
T_2&\ge& \frac{1}{(d(\Omega)+\rho)^{n+2s}}\int_{B_{R_0}}\int_{\Omega_{\rho}\cap\Omega^c}|u(y)|^2dydx\notag\\
&=& \frac{|B_{R_0}|}{(d(\Omega)+\rho)^{n+2s}}\int_{\Omega_{\rho}\cap\Omega^c}|u(y)|^2dy\notag\\
&:=&b\int_{\Omega_{\rho}\cap\Omega^c}|u(y)|^2dy.
\end{eqnarray*}
Here $b$ is a positive constant depending on $\Omega, R_0,\rho, n,s$.
Finally,
\begin{eqnarray*}
T_3&=&2\int_{B_{R_0}}|u(x)|\int_{\Omega_{\rho}\cap\Omega^c}\frac{|u(y)|}{|x-y|^{n+2s}}dydx\notag\\
&\le& \frac{2}{d(B_{R_0},\partial \Omega)^{n+2s}}\int_{B_{R_0}}|u(x)|\left\{\int_{\Omega_{\rho}\cap\Omega^c}|u(y)|dy\right\}dx\notag\\
&=& \frac{2\int_{B_{R_0}}|u(x)|dx}{d(B_{R_0},\partial \Omega)^{n+2s}}\int_{\Omega_{\rho}\cap\Omega^c}|u(y)|dy\notag\\
&:=&c\int_{\Omega_{\rho}\cap\Omega^c}|u(y)|dy,
\end{eqnarray*}
where $c$ is also a nonnegative constant depending on $\Omega, R_0,\rho, n,s$. Therefore, we obtain that
\begin{eqnarray}\label{e:ior}
I\ge a+b\int_{\Omega_{\rho}\cap\Omega^c}|u(y)|^2dy-c\int_{\Omega_{\rho}\cap\Omega^c}|u(y)|dy.
\end{eqnarray}
Note that by the proof of (\ref{e:ior}), it holds that, for any $X\subset \Omega_{\rho}\cap\Omega^c$,
\begin{equation*}
I\ge a+b\int_{X}|u(y)|^2dy-c\int_{X}|u(y)|dy.
\end{equation*}
We then argue by contradiction. Assume that $u\notin L^2(\Omega_{\rho}\cap\Omega^c)$, that is
$$\int_{\Omega_{\rho}\cap\Omega^c}|u(y)|^2dy=+\infty.$$
From (\ref{e:ior}), we have that $u\notin L^1(\Omega_{\rho}\cap\Omega^c)$. Let
\begin{equation*}
A_k:=\{y\in \Omega_{\rho}\cap\Omega^c\,|\,|u(y)|>2^k\}
\end{equation*}
and
\begin{equation*}
D_k:=A_k\setminus A_{k+1}=\{y\in \Omega_{\rho}\cap\Omega^c\,|\,2^k<|u(y)|\le 2^{k+1}\}.
\end{equation*}
Set $d_k$ to be the measure of $D_k$. Let $N_1$ be a positive integer such that $2^{N_1-1}>\frac{c}{b}$. Then, for all $N_2>N_1$,
\begin{eqnarray}\label{e:an1}
\int_{A_{N_1}\setminus A_{N_2}}|u(y)|dy=\sum_{k=N_1}^{N_2}\int_{A_{k}\setminus A_{k+1}}|u(y)|dy\le \sum_{k=N_1}^{N_2}2^{k+1}d_k\to +\infty,\quad \mbox{as }N_2\to \infty,
\end{eqnarray}
and
\begin{eqnarray*}
\int_{A_{N_1}\setminus A_{N_2}}|u(y)|^2dy=\sum_{k=N_1}^{N_2}\int_{A_{k}\setminus A_{k+1}}|u(y)|^2dy\ge \sum_{k=N_1}^{N_2}2^{2k}d_k\to +\infty,\quad \mbox{as }N_2\to \infty.
\end{eqnarray*}
Since
\begin{equation*}
\sum_{k=N_1}^{N_2}2^{2k}d_k> 2^{N_1-1}\sum_{k=N_1}^{N_2}2^{k+1}d_k,
\end{equation*}
we have that
\begin{eqnarray*}
I&\ge& a+b\int_{A_{N_1}\setminus A_{N_2}}|u(y)|^2dy-c\int_{A_{N_1}\setminus A_{N_2}}|u(y)|dy\notag\\
&>&a+2^{N_1-1}b\sum_{k=N_1}^{N_2}2^{k+1}d_k-c\sum_{k=N_1}^{N_2}2^{k+1}d_k.
\end{eqnarray*}
It is a contradiction to (\ref{e:an1}). Therefore, $u\in L^2(\Omega_{\rho}\cap\Omega^c)$. Note that $u\in L^2(\Omega)$, we obtain that
$u\in L^2(\Omega_{\rho})$. Since $\rho$ is arbitrary, it follows that $u\in L^2_{{\rm loc}}(\mathbf R^n)$.

(2) Let $R$ be a positive constant such that $\Omega\subset B_R(0)$. By the proof of Proposition 3.13 in \cite{DRV:NPNBC}, we know that if $\mathcal N_s(u)=0$, then $u$ is bounded in $B_R^c(0)$ with
\begin{equation*}
\lim_{|x|\to \infty}u(x)=\frac{1}{|\Omega|}\int_{\Omega}u(x)dx \quad \mbox{uniformly in }x.
\end{equation*}
Therefore,
\begin{equation*}
\int_{B_{R}^c(0)}\frac{|u(x)|}{1+|x|^{n+2s}}dx\le C\int_{B_{R}^c(0)}\frac{1}{1+|x|^{n+2s}}dx<+\infty,
\end{equation*}
where $C$ is a constant such that $\sup_{B_R^c(0)}|u|\le C$. So,
we only need to consider $u$ on $B_{R}(0)$. From the conclusion (1), it follows that
\begin{equation*}
u|_{B_R(0)}\in L^2(B_R(0)).
\end{equation*}
Thus, we have that
\begin{eqnarray*}
\int_{B_R(0)}\frac{|u(x)|}{1+|x|^{n+2s}}dx<+\infty.
\end{eqnarray*}
This completes the proof.
\end{proof}



\newcommand{\Toappear}{to appear in}

\bibliography{mrabbrev,cz_abbr2003-0,localbib}

\begin{thebibliography}{10}

\bibitem{Ad:SS}
Robert~A. Adams.
\newblock {\em Sobolev spaces}.
\newblock Academic Press [A subsidiary of Harcourt Brace Jovanovich,
  Publishers], New York-London, 1975.
\newblock Pure and Applied Mathematics, Vol. 65.

\bibitem{AMN:SPEE}
Antonio Ambrosetti, Andrea Malchiodi, and Wei-Ming Ni.
\newblock Singularly perturbed elliptic equations with symmetry: existence of
  solutions concentrating on spheres. {I}.
\newblock {\em Comm. Math. Phys.}, 235(3):427--466, 2003.

\bibitem{Amb&Mal&Ni04}
Antonio Ambrosetti, Andrea Malchiodi, and Wei-Ming Ni.
\newblock Singularly perturbed elliptic equations with symmetry: existence of
  solutions concentrating on spheres. {II}.
\newblock {\em Indiana Univ. Math. J.}, 53:297--329, 2004.

\bibitem{Bar&Cha&Geo&Jak14}
G.~Barles, E.~Chasseigne, C.~Geogelin, and E.~Jakobsen.
\newblock On {N}eumann type problems for nonlocal equations in a half space.
\newblock {\em Trans. Amer. Math. Soc.}, 366:4873--4917, 2014.

\bibitem{Bar&Geo&Jak14}
G.~Barles, C.~Geogelin, and E.~Jakobsen.
\newblock On {N}eumann and oblique derivatives boundary conditions for nonlocal
  elliptic equations.
\newblock {\em J. Differential Equations}, 256:1368--1394, 2014.

\bibitem{Bogdan&Burdzy&Chen03}
K.~Bogdan, K.~Burdzy, and Z.-Q. Chen.
\newblock Censored stable processes.
\newblock {\em Probab. Theorey Relat. Fields}, 127:89--152, 2003.

\bibitem{BW:SWCF}
Jaeyoung Byeon and Zhi-Qiang Wang.
\newblock Standing waves with a critical frequency for nonlinear
  {S}chr\"odinger equations.
\newblock {\em Arch. Ration. Mech. Anal.}, 165(4):295--316, 2002.

\bibitem{byeon2006spherical}
Jaeyoung Byeon and Zhi-Qiang Wang.
\newblock Spherical semiclassical states of a critical frequency for
  schr{\"o}dinger equations with decaying potentials.
\newblock {\em Journal of the European Mathematical Society}, 8(2):217--228,
  2006.

\bibitem{Cabre&Sire12}
Xavier Cabr\'e and Yannick Sire.
\newblock Nonlinear equations for fractional laplacians, i: Regularity, maximum
  principles, and hamiltonian estimates.
\newblock {\em Annales de l'Institut Henri Poincare (C) Non Linear Analysis},
  31(1):23--53, 2014.

\bibitem{Caf&Roq&Sav10}
L.~Caffarelli, J.-M. Roquejoffre, and O.~Savin.
\newblock Nonlocal minimal surfaces.
\newblock {\em Comm. Pure Appl. Math.}, 63(9):1111--1144, 2010.

\bibitem{chen2015multiple}
Guoyuan Chen.
\newblock Multiple semiclassical standing waves for fractional nonlinear
  schr{\"o}dinger equations.
\newblock {\em Nonlinearity}, 28(4):927--949, 2015.

\bibitem{CZ:CPFSE}
Guoyuan Chen and Youquan Zheng.
\newblock Concentration phenomenon for fractional nonlinear {S}chr\"odinger
  equations.
\newblock {\em Comm. Pure Appl. Anal.}, 13(6):2359--2376, 2014.

\bibitem{Chen&Kim02}
Z.-Q. Chen and P.~Kim.
\newblock Green function estimate for censored stable processes.
\newblock {\em Probab. Theory Relat. Fields}, 124:595--610, 2002.

\bibitem{Cor&Elg&Ros&Wol07}
C.~Cortazar, M.~Elgueta, J.~Rossi, and N.~Wolanski.
\newblock Boundary fluxes for nonlocal diffussion.
\newblock {\em J. Differential Equations}, 234:360--390, 2007.

\bibitem{Cor&Elg&Ros&Wol08}
C.~Cortazar, M.~Elgueta, J.~Rossi, and N.~Wolanski.
\newblock How to approximate the heat equation with {N}eumann boundary
  conditions by nonlocal diffusion problems.
\newblock {\em Arch. Rat. Mech. Anal.}, 187:137--156, 2008.

\bibitem{DaDPDiVa14}
Juan D{\'a}vila, Manuel del Pino, Serena Dipierro, and Enrico Valdinoci.
\newblock Concentration phenomena for the nonlocal schr{\"o}dinger equation
  with dirichlet datum.
\newblock {\em Analysis \& PDE}, 8(5):1165--1235, 2015.

\bibitem{DDW:CSWFSE}
Juan D{\'a}vila, Manuel del Pino, and Juncheng Wei.
\newblock Concentrating standing waves for the fractional nonlinear
  {S}chr\"odinger equation.
\newblock {\em J. Differential Equations}, 256(2):858--892, 2014.

\bibitem{DP&Kow&Wei07}
Manuel del Pino, Michal Kowalczyk, and Jun-Cheng Wei.
\newblock Concentration on curves for nonlinear {S}chr\"odinger equations.
\newblock {\em Comm. Pure Appl. Math.}, 60(1):113--146, 2007.

\bibitem{DiNezza&Palatucci&Valdinoci12}
E.~Di~Nezza, G.~Palatucci, and E.~Valdinoci.
\newblock Hitchhiker's guide to fractional {S}obolev spaces.
\newblock {\em Bull. Sci. Math.}, 136:521--573, 2012.

\bibitem{DRV:NPNBC}
Serena Dipierro, Vavier Ros-Oton, and Enrico Valdinoci.
\newblock Nonlocal problems with {N}eumann boundary conditions.
\newblock arXiv:1407.3313v3.

\bibitem{Du&Gu&Le&Zh12}
Qiang Du, Max Gunzburger, R.B. Lehoucq, and Kun Zhou.
\newblock Analysis and approximation of nonlocal diffusion problems with volume
  constraints.
\newblock {\em SIAM Review}, 54(4):667--696, 2012.

\bibitem{Du&Gu&Le&Zh13}
Qiang Du, Max Gunzburger, R.B. Lehoucq, and Kun Zhou.
\newblock A nonlocal vector calculus, nonlocal volume-constrained problems, and
  nonlocal balance laws.
\newblock {\em Math. Models Methods Appl. Sci.}, 23(03):493--540, 2013.

\bibitem{FaMaVa14}
Mouhamed~Moustapha Fall, Fethi Mahmoudi, and Enrico Valdinoci.
\newblock Ground states and concentration phenomena for the fractional
  schr{\"o}dinger equation.
\newblock {\em Nonlinearity}, 28(6):1937, 2015.

\bibitem{Grubb14}
Gerd Grubb.
\newblock Local and nonlocal boundary conditions for $\mu$-transmission and
  fractional elliptic pseudodifferential operators.
\newblock {\em Analysis \& PDE}, 7(7):1649--1682, 2014.

\bibitem{Gui&Wei99}
Changfeng Gui and Juncheng Wei.
\newblock Multiple interior peak solutions for some singularly perturbed
  problems.
\newblock {\em J. Differential Equations}, 158:1--27, 1999.

\bibitem{Gui&Wei00}
Changfeng Gui and Juncheng Wei.
\newblock On multiple mixed interior and boundary peak solutions for some
  singularly perturbed {N}eumann problems.
\newblock {\em Canad. J. Math.}, 52:522--538, 2000.

\bibitem{Gui&Wei&Winter00}
Changfeng Gui, Juncheng Wei, and M.~Winter.
\newblock Multiple boundary peak solutions for some singularly perturbed
  {N}eumann problems.
\newblock {\em Ann. Inst. H. Poincar\'e Anal. Non Lin\'eaire}, 17:47--82, 2000.

\bibitem{jin2014fractional}
Tianling Jin, YanYan Li, and Jingang Xiong.
\newblock On a fractional nirenberg problem, part i: blow up analysis and
  compactness of solutions.
\newblock {\em Journal of the European Mathematical Society}, 16(6):1111--1171,
  2014.

\bibitem{Lin&Ni&Takagi88}
C.-S. Lin, W.-M. Ni, and I.~Takagi.
\newblock Large amplitude stationary solutions to a chemotaxis system.
\newblock {\em J. Differential Equations}, 72(1):1--27, 1988.

\bibitem{Malchiodi&Montenegro02}
A.~Malchiodi and M.~Montenegro.
\newblock Boundary concetration phenomenon for a singularly perturbed elliptic
  problem.
\newblock {\em Comm. Pure Appl. Math.}, 55:1507--1568, 2002.

\bibitem{Malchiodi&Montenegro04}
A.~Malchiodi and M.~Montenegro.
\newblock Multidimensional boundary-layers for a singularly perturbed {N}eumann
  problem.
\newblock {\em Duke Math. J.}, 124:105--143, 2004.

\bibitem{Ni&Takagi86}
W.-M. Ni and I.~Takagi.
\newblock On the {N}eumann problem for some semilinear elliptic equations and
  systems of activator-inhibitor type.
\newblock {\em Trans. Amer. Math. Soc.}, 297:351--368, 1986.

\bibitem{Ni&Takagi91}
W.-M. Ni and I.~Takagi.
\newblock On the shape of least energy solutions to a semilinear {N}eumann
  problem.
\newblock {\em Comm. Pure Appl. Math.}, 44:819--851, 1991.

\bibitem{Ni&Takagi93}
W.-M. Ni and I.~Takagi.
\newblock Locating the peaks of least-energy solutions to a semilinear
  {N}eumann problem.
\newblock {\em Duke Math. J.}, 70:247--281, 1993.

\bibitem{Ni11}
Wei-Ming Ni.
\newblock {\em The mathematics of diffusion}.
\newblock CBMS-NSF Regional Conference Series in Applied Mathematics. Society
  for Industrial and Applied Mathematics, Philadelphia, 2011.

\bibitem{Savin&Valdinoci11}
O.~Savin and E.~Valdinoci.
\newblock Density estimates for a variational model driven by the gagliardo
  norm.
\newblock {\em J. Math. Pures Appl.}, 101:1--26, 2014.

\bibitem{Silvestre07}
Luis Silvestre.
\newblock Regularity of the obstacle problem for a fractional power of the
  {L}aplace operator.
\newblock {\em Comm. Pure Appl. Math.}, 60(1):67--112, 2007.

\bibitem{stinga2015fractional}
Pablo~Ra{\'u}l Stinga and Bruno Volzone.
\newblock Fractional semilinear neumann problems arising from a fractional
  keller--segel model.
\newblock {\em Calculus of Variations and Partial Differential Equations},
  54(1):1009--1042, 2015.

\end{thebibliography}

\bibliographystyle{plain}
\end{document}